\documentclass[11pt,reqno]{amsart}

\usepackage{amsmath,amsfonts,amsthm,amssymb,amsxtra,comment}
\usepackage[colorlinks,citecolor=red,hypertexnames=false]{hyperref} 

\usepackage{color}
\usepackage[shortlabels]{enumitem}
\usepackage{bbm} 
\usepackage{stmaryrd}
\usepackage{nicematrix}
\usepackage{mathrsfs} 
\usepackage{float}
\usepackage{graphicx}
\usepackage{tikz-cd}

\usepackage[margin=1.1in]{geometry}
\usepackage{extarrows}

\usepackage{subcaption}

\usepackage{tikz}
\usepackage{pgfplots}
\pgfplotsset{compat=1.18} 
\usetikzlibrary{calc}

\newtheorem{theorem}{Theorem}[section]

\newtheorem{proposition}[theorem]{Proposition}
\newtheorem{lemma}[theorem]{Lemma}
\newtheorem{corollary}[theorem]{Corollary}

\theoremstyle{definition}

\theoremstyle{remark}

\numberwithin{equation}{section}
\allowdisplaybreaks

\newcommand{\C}{\mathbb{C}}

\renewcommand{\epsilon}{\varepsilon}

\newcommand{\N}{\mathbb{N}}

\newcommand{\cD}{\mathcal{D}}
\newcommand{\cR}{\mathcal{R}}

\newcommand{\bbR}{\mathbb{R}}
\newcommand{\bbC}{\mathbb{C}}

\newcommand{\sE}{\mathsf{E}}

\newcommand{\ess}{\mathrm{ess}}

\renewcommand{\phi}{\varphi}
\newcommand{\R}{\mathbb{R}}

\newcommand{\T}{\mathbb{T}}

\newcommand{\Z}{\mathbb{Z}}

\DeclareFontFamily{U}{mathx}{}
\DeclareFontShape{U}{mathx}{m}{n}{<-> mathx10}{}
\DeclareSymbolFont{mathx}{U}{mathx}{m}{n}
\DeclareMathAccent{\widehat}{0}{mathx}{"70}
\DeclareMathAccent{\widecheck}{0}{mathx}{"71}

  \renewcommand{\mod}{{\rm \, mod\, }}

\newcommand{\vertiii}[1]{{\left\vert\kern-0.25ex\left\vert\kern-0.25ex\left\vert #1
    \right\vert\kern-0.25ex\right\vert\kern-0.25ex\right\vert}}

\newcommand{\comments}[1]{}

\renewcommand{\Re}{\mathrm{Re}}
\renewcommand{\Im}{\mathrm{Im}}

\begin{document}

 \title[Almost periodic solutions of mKdV]{Almost periodic solutions of the defocusing mKdV equation}

\author[L. Li]{Long Li}

\address{L. Li: Texas A\&M University,
College Station, TX 77843, USA}
\email{\href{mailto:longli@tamu.edu}{longli@tamu.edu}}
\thanks{L.\ L.\ was supported by AMS-Simons Travel Grant 2024-2026.}

\author[M. Luki{\'c}]{Milivoje Luki{\'c}}
 
\address{M.\ Luki\'c: Emory University, Atlanta, GA~30322, USA}
\email{\href{milivoje.lukic@emory.edu}{milivoje.lukic@emory.edu}}
\thanks{M.\ L.\ was supported in part by NSF grants DMS--2154563/2626207 and DMS--2453758/2626193.}

\begin{abstract}
We study the Cauchy problem for the defocusing mKdV equation with almost periodic initial data. If the initial data corresponds to a reflectionless Dirac operator which satisfies certain Craig-type conditions on the spectrum, we prove that the Cauchy problem has solution that is almost periodic in spacetime, and that this is the only solution which is locally bounded in a suitable sense.
In particular, the result applies to small analytic quasiperiodic initial data with Diophantine frequencies. 
\end{abstract}

\maketitle

\tableofcontents

\section{Introduction and Main Results}
In this work we will consider the Cauchy problem of the defocusing modified KdV  (mKdV) equation
\begin{align}
\partial_t u & = -u'''+6|u|^2 u', \label{eqn:mKdV} \\
     u(x,0) & =\varphi(x), \label{eqn:cauchyI}
\end{align}
where $\varphi:\R\to\C$ is the initial data and $u'=\partial_x u$. This is an integrable equation: it corresponds to the third equation in the AKNS hierarchy \cite{AKNS,ZS,GH03}, the same hierarchy which contains the  nonlinear Schr\"odinger equation. This hierarchy, in its general form, describes flows on a pair of functions $(p,q)$, and its third equation is 
\[
\left(\begin{aligned}
\partial_{t}p+\frac{1}{4}\partial_x^3p-\frac{3}{2}p\partial_xpq\\
\partial_{t}q+\frac{1}{4}\partial_x^3q-\frac{3}{2}pq\partial_x q
\end{aligned}\right)=0.\]
Restricting that system to $p = \pm \bar q$ gives rise to the defocusing/focusing complex mKdV equations
\[
    \partial_tq=-q'''\pm 6|q|^2q'.
\]
The defocusing mKdV \eqref{eqn:mKdV} corresponds to the case \(p=\overline{q}\). Restricting to real solutions recovers the classical mKdV equation of Miura \cite{Miura1968}:
\[\partial_t q=-q'''\pm 6q^2q'.\]

Well-posedness of the mKdV equation has been extensively studied for decaying initial data
\cite{Tsutsumi1981,Kato1983, KPV1989DMJ, KPV1993CPAM,CKSTT2003JAMS,Guo2009JMPA,Kis2009DIE, GKV2024ForumPi} and for periodic initial data \cite{Bourgain1,Bourgain2,CKSTT2003JAMS,CCT2003AJM,KappelerTopalovmKdV,Molinet,KappelerMolnar,Chapouto}. In particular, Harrop-Griffiths--Killip--Vi\c san \cite{GKV2024ForumPi} showed that mKdV is globally well-posed in $H^s(\mathbb{R})$ for $s >-\frac{1}{2}$. The construction of solutions of mKdV and their long time asymptotics are a subject of continued interest \cite{DZ1993Ann, Griffiths2016CPDE,GGJMM}.

In this paper, we study non-decaying, non-periodic initial data; these solutions cannot be studied in a space such as $H^s(\mathbb{T})$ or $H^s(\mathbb{R})$, but further questions about their structure are motivated by integrability. A canonical class of solutions of mKdV are obtained by an algebro-geometric construction using the Its-Matveev formula \cite{ItsMatveev1975b}, see also \cite{DN1974,Lax1975CPAM,DMN1976,BBEIM1994,GH03,DeConciniJohnson}. Such solutions are known for many integrable equations, and correspond to the case when the Lax operator has spectrum consisting of finitely many intervals and has purely absolutely continuous spectrum. These solutions are spacetime quasiperiodic. Integrability motivates more general questions: whether almost periodic initial data give solutions which are almost periodic in time, or spacetime almost periodic. This was conjectured by P. Lax \cite{Lax1975CPAM} in the setting of the KdV equation, supported by numerical evidence of  M. Hyman that can be found in the appendix of \cite{Lax1975CPAM}.  This problem was popularized by P. Deift \cite{DeiftOpenProblem2008, DeiftOpenProblem2017} and is now known as the \emph{Deift conjecture}; it is particularly well-studied in the KdV setting.
McKean--Trubowitz \cite{MT1976CPAM} studied this problem with periodic initial data whose associated Lax operator has infinitely many gaps in the spectrum and proved almost periodicity in time. For certain limit-periodic initial data with rapid periodic approximants, see \cite{Egorova1992}. Beyond the periodic setting, severe issues arise since the spectrum is typically a Cantor set, with infinitely many gaps. The same issues arose in time-independent questions in the inverse spectral theory of almost periodic operators, and a description of the isospectral torus for suitable infinite-gap sets (regular Widom sets with the DCT property) was developed by Sodin--Yuditskii \cite{SY97} for Jacobi matrices, see also \cite{SY95,EVY2019,BLY2}.  In \cite{BDGL2018DMJ}, the Deift conjecture was confirmed for the KdV equation with a class of almost periodic initial data whose associated Lax operator has absolutely continuous spectrum and the spectrum obeys suitable thickness assumptions. The result was further extended to the full KdV hierarchy \cite{EVY2019,LY2020JFA} and to the defocusing nonlinear Schr\"odinger equation \cite{FLLZ}. It is also known that the conjecture is not true in full generality: Chapouto--Killip--Vi\c{s}an \cite{ChapoutoKillipVisan} found counterexamples to the Deift conjecture in the sense that the solution with almost periodic initial data can become discontinuous in $x$ at some future time.

In this work, we will extend the results of \cite{BDGL2018DMJ,FLLZ} to the defocusing mKdV equation and confirm the Deift conjecture under certain conditions on the spectrum of the Lax operator. In particular, the result applies to the small analytic quasi-periodic initial data with a full measure condition Diophantine frequencies. This analysis is based on the direct and inverse spectral theory of the Lax operator; since the NLS and mKdV are part of the same hierarchy, they have the same Lax operator, and this work can be seen as an extension of our prior joint work with Fillman and Zhou \cite{FLLZ} for the defocusing NLS. We will describe the substantial differences and refer to \cite{FLLZ} where the analysis is analogous. We note also that our work is naturally in the setting of the complex mKdV equation \eqref{eqn:mKdV}, but it obviously includes special cases where the solution is real and solves the real mKdV equation.

The  Lax operator for the defocusing mKdV equation can be taken as the Dirac operator
\begin{align}\label{eqn:DiracOper}
\Lambda_\varphi = \begin{bmatrix}
	i & 0 \\ 0 & -i
\end{bmatrix}	 \frac{d}{dx} + \begin{bmatrix} 0 & \varphi \\ \overline{\varphi} & 0 \end{bmatrix}.
\end{align}
The function $\varphi$ is called the operator data or the potential of $\Lambda_\varphi$. If \(\varphi\in L^\infty(\R)\), the Dirac operator is self-adjoint with operator domain $H^1(\bbR)^2$. Its spectrum \(\mathsf{E}\subset\R\) is an unbounded closed subset. The connected components of \(\R\setminus\mathsf{E}\) consists of at most countably many \emph{spectral gaps} \(G_j=(a_j,b_j), j\in J\subset \N\) such that \(\mathsf{E}=\R\setminus\cup_{j\in J}G_j\). Denote 
\begin{align}
    &\gamma_j=b_j-a_j,\,\eta_{jk}=\mathrm{dist}(G_j,G_k),\label{eqn:gapsizeDist}\\
    &C_j=\sup_{\lambda\in G_j}\left(\prod_{G_k<G_j}\frac{a_k-\lambda}{b_k-\lambda}\prod_{G_j<G_k}\frac{b_k-\lambda}{a_k-\lambda}\right)^{\frac{1}{2}},\label{eqn:Cj}
\end{align}
where we used \(G_k<G_j\) to mean \(G_k\) lies to the left of \(G_j\) in \(\R\). In order to present our main results, we will need the following  \emph{Craig-type conditions}:
\begin{align}
    &\sup_{j\in J}C_j^2(1+\eta_{j0}^3)\sum_{\ell\neq j}\frac{\gamma_j^{1/2}\gamma_\ell^{1/2}}{\eta_{j\ell}}<\infty,\label{eqn:craigCond-1}\\
    &\sum_{\ell}C_{\ell}^2\gamma_{\ell}^{1/2}(1+\eta_{0\ell}^2)<\infty.\label{eqn:craigCond-3}
\end{align}

Our first main result is as follows.
\begin{theorem}\label{thm:main1}
    Assume that \(\varphi\) is uniformly almost periodic and its associated Dirac operator $\Lambda_\varphi$ has purely absolutely continuous spectrum that obeys the Craig-type conditions \eqref{eqn:craigCond-1} and \eqref{eqn:craigCond-3}. Then the Cauchy problem \eqref{eqn:mKdV}, \eqref{eqn:cauchyI} has a unique solution \(u(x,t)\) that is almost periodic in both space and time variables.

    Moreover, there exists a continuous map \(\mathcal{M}:\T^\infty\times\T\to\C\), a set of frequencies \(\eta,\eta^{(1)}\in\R^\infty\), phases \(\vartheta_0,\vartheta_1\in\R,\zeta\in\R^\infty,\zeta'\in\R\) such that 
    \[u(x,t):=\mathcal{M}(\zeta+x\eta+t\eta^{(1)},\zeta'+\vartheta_0x+\vartheta_1t)\]
    solves \eqref{eqn:mKdV}, \eqref{eqn:cauchyI}. For any other classical solution \(\tilde{u}(x,t)\) on a time interval \(t\in [0,T]\) with \(\tilde{u}(x,0)=\varphi(x)\) satisfying the local boundedness condition
    \begin{equation}\label{eqn:mKdVbounded}
    \tilde{u},\partial_x\tilde{u},\partial_x^2\tilde{u}\in L^\infty(\R\times[0,T]),
    \end{equation}
    we have \(\tilde{u}=u.\)
\end{theorem}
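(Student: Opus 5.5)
The plan is to follow the strategy of \cite{BDGL2018DMJ,FLLZ}: a spectral construction of the solution via the isospectral torus, plus a separate uniqueness argument based on the Lax pair.

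\textbf{Step 1: parametrize the isospectral torus.} Since $\varphi$ is uniformly almost periodic and $\Lambda_\varphi$ has purely a.c.\ spectrum $\sE$, Kotani theory implies that $\Lambda_\varphi$ is reflectionless on $\sE$. Following \cite{FLLZ}, I would parametrize reflectionless Dirac operators with spectrum $\sE$ by Dirichlet-type data: one point $(\mu_j,\sigma_j)$ on each gap circle (two copies of $\overline{G_j}$ glued at the endpoints), together with an extra $\T$-valued phase coming from the rotation symmetry $\varphi\mapsto e^{i\theta}\varphi$.

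\textbf{Step 2: Abel map and linearization.} The Craig-type conditions \eqref{eqn:craigCond-1}, \eqref{eqn:craigCond-3} should guarantee three things:
\begin{itemize}
\item the Abel map to $\T^\infty\times\T$ is a homeomorphism;
\item the trace formulas express $\varphi$ continuously in terms of the divisor;
\item the flows are linear in these coordinates.
\end{itemize}
The $x$-translation flow becomes linear with frequencies $\eta$ (harmonic measures of the gaps) and $\vartheta_0$. The mKdV flow, the third AKNS flow, becomes linear with frequencies $\eta^{(1)}$ and $\vartheta_1$. The new point compared to \cite{FLLZ} is the derivation of the Dubrovin-type equations for the third flow. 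Writing $\dot\mu_j$ in terms of $\sigma_j\sqrt{\prod(\cdots)}$ times a cubic-type polynomial weight in $\mu_j$, the growth factors $(1+\eta_{j0}^3)$ and $(1+\eta_{0\ell}^2)$ appear. These weights are exactly what \eqref{eqn:craigCond-1} and \eqref{eqn:craigCond-3} control.

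Under these conditions I would show that the Dubrovin vector field is Lipschitz in a weighted $\ell^\infty$ norm on the divisor coordinates, using $C_j$ to bound the products and \eqref{eqn:craigCond-1} for the off-diagonal sums. This yields global existence and uniqueness of the divisor flow. One then checks that the trace formula $u=\mathcal{M}(\cdot)$ applied to the flowed divisor gives a classical solution of \eqref{eqn:mKdV}. This can be done by verifying the zero-curvature/Lax equation $\partial_t\Lambda=[P,\Lambda]$ through the Weyl functions, or by first approximating with finite-gap solutions, for which the Its--Matveev formulas hold, and then passing to the limit via the uniform Lipschitz bounds. Almost periodicity in spacetime then follows from continuity of $\mathcal{M}$ on the compact group $\T^\infty\times\T$.

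\textbf{Step 3: uniqueness.} For $\tilde u$ satisfying \eqref{eqn:mKdVbounded}, I would show that the Lax equation makes $\Lambda_{\tilde u(\cdot,t)}$ unitarily equivalent to $\Lambda_\varphi$. The bounds are exactly what make $P$ a bounded-coefficient third-order operator, so that the propagator is well defined. Hence $\tilde u(\cdot,t)$ stays reflectionless with the same spectrum $\sE$. Next I would derive that the Dirichlet data of $\tilde u(\cdot,t)$ obey the same Dubrovin equations, using the $m$-functions at a fixed point $x=0$ and their time evolution, which follows from the Lax pair acting on Weyl solutions. Uniqueness for the Lipschitz Dubrovin ODE then forces $\tilde u=u$.

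\textbf{Main obstacle.} I expect the main obstacle to be Step 2, specifically establishing the Lipschitz estimate for the third-order Dubrovin flow with the cubic growth in gap location. The first thing I would try is to reduce the estimate to the NLS case in \cite{FLLZ}, bounding the extra polynomial factor by $1+\eta_{j0}^3$. A secondary difficulty is justifying in Step 3 that the Weyl solutions of $\tilde u$ evolve correctly under only the regularity \eqref{eqn:mKdVbounded}.
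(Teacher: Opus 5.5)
Your architecture matches the paper's: Dubrovin fields $\Psi,\Xi$ on $\cD(\sE)$, made Lipschitz for $\sup_j\gamma_j^{1/2}|y_j|$ via $C_j$ and the Craig-type conditions; a global ODE solution; the trace formula; linearization and the homeomorphism property of $\mathcal A$; and $\mathcal M=\mathcal B^{-1}\circ\mathcal A^{-1}$. However, two steps do not go through as you wrote them.

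\textbf{Step 3 (reflectionlessness).} Unitary equivalence of $\Lambda_{\tilde u(\cdot,t)}$ with $\Lambda_\varphi$ gives you the spectrum, but not reflectionlessness. Reflectionlessness is a condition on the half-line Schur functions $s_\pm$ and is not a unitary invariant. For example, a short-range perturbation of a finite-gap potential with no gap eigenvalues typically has the same spectral type and multiplicity but nonzero reflection. You need $\tilde u(\cdot,t)\in\cR(\sE)$ before you can use the product formula for $R$ or write any Dubrovin equation for $\tilde u$, so the ``hence'' fails. The paper needs no propagator for $P$. By Lemma~\ref{lem:RicattiTime} (Rybkin's argument, which is exactly where \eqref{eqn:mKdVbounded} enters), solutions of $\partial_x\Phi=U\Phi$, $\partial_t\Phi=\widetilde V_3\Phi$ remain Weyl solutions. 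Hence $s_\pm$ obey Riccati equations in $t$ whose coefficients $a,b,c$ are polynomials in $z$. Then $1/s_+$ satisfies the same equation as $s_-$, so $\overline{s_+}=s_-$ a.e.\ on $\sE$ propagates in time. The same equations yield $\partial_t y_j$. The tool you list as a ``secondary difficulty'' is thus the whole mechanism, and it is what repairs this step.

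\textbf{Step 2 (closing the $t$-flow) and Step 1 (phase bookkeeping).} Computed this way, $\partial_tR|_{z=\mu_j}$ involves $u,\partial_xu,\partial_x^2u$. You must convert it into a function of the divisor. Otherwise $\Xi$ is not a vector field on $\cD(\sE)$, and both the ODE-uniqueness argument and the construction of $u$ by integrating the flow are unavailable. This is the genuinely new input beyond the NLS case. With $Q_k=\sum_j(a_j^k+b_j^k-2\mu_j^k)$, besides $\Re u=-\frac12Q_1$ and $\partial_x\Im u+(\Im u)^2=\frac12Q_2$, one needs the third trace formula $\frac12\partial_x^2\Re u+\Im u\,\partial_x\Re u=\frac{Q_3}{3}-\frac{Q_1^3}{12}$ (Lemma~\ref{lem:scalarField-3}, from the $z^{-3}$ coefficient of $R$). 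After that, $\Xi_j$ is $W_j$ times a cubic in $\mu_j$ whose coefficients are polynomials in $Q_1,Q_2,Q_3$. The weight $1+\eta_{0\ell}^2$ in \eqref{eqn:craigCond-3} comes from the $y_\ell$-derivatives of the $Q_k$.

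Your Step 1 count is also off. The Dirichlet data are zeros of $R=i(1-s_+)(1-s_-)/(1-s_+s_-)$, which is not invariant under $\varphi\mapsto e^{i\theta}\varphi$ (since $s_\pm\mapsto e^{\pm i\theta}s_\pm$). So one point per gap already fixes the phase, and $\cD(\sE)$ is the whole isospectral torus. The extra $\T$ lives only on the Abel side, $\pi_1(\Omega)^*\times\T$, compensating for the reference gap $G_0$, through which no generating loop passes. An additional free phase would overcount, and since no Dubrovin equation governs it, your conclusion ``ODE uniqueness forces $\tilde u=u$'' would not follow.

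Finally, only $\Re u$ is given directly by a trace formula. The paper obtains $\Im u=\frac12Q_1(\tilde y)$ by running the same flow from $\mathcal B(i\varphi)$. This works because $iu$ again solves \eqref{eqn:mKdV} and $\Lambda_{iu}=D\Lambda_uD^{-1}$ with $D=\mathrm{diag}(e^{i\pi/4},e^{-i\pi/4})$, a conjugation that does preserve $\cR(\sE)$. Applying the same argument to $i\tilde u$ completes uniqueness.
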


As an application, the theorem can be applied to small analytic quasiperiodic initial data with Diophantine frequencies:

\begin{corollary}\label{cor:main2}
    Assume that \(\varphi(x)=Q(x\omega),Q\in C^\omega(\T^d,\C),d\geq 2\) is analytic and \(\omega\) is a Diophantine number satisfying \(\inf_{j\in\Z}|\langle n,\omega\rangle-j|\geq\frac{\kappa}{|n|^\tau}\) for any nonzero \(n\in\Z^d.\) For each \(h>0\), there exists $\epsilon_0=\epsilon_0(d,\kappa,\tau,h)>0$ such that if
    \[\sup_{|y|<h}|Q(x+iy)|<\epsilon_0,\]
    then the conclusions of Theorem~\ref{thm:main1} hold.
\end{corollary}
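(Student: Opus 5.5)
The corollary reduces to checking the hypotheses of Theorem~\ref{thm:main1} for $\varphi(x)=Q(x\omega)$. There are three things to verify: that $\varphi$ is uniformly almost periodic, that $\Lambda_\varphi$ has purely absolutely continuous spectrum and is reflectionless, and that the spectrum $\mathsf{E}$ satisfies \eqref{eqn:craigCond-1} and \eqref{eqn:craigCond-3}.

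\emph{Almost periodicity.} Since $Q$ is continuous on $\T^d$, $\varphi$ is a continuous quasiperiodic function. Such functions are uniformly almost periodic.

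\emph{Spectral input.} For the spectral properties I would invoke the perturbative KAM/reducibility theory for quasiperiodic Dirac (AKNS/Zakharov--Shabat) operators with small analytic potentials and Diophantine frequencies. This is the Dirac analogue of the Eliasson and Damanik--Goldstein results for Schr\"odinger operators, and it is already used in \cite{FLLZ} for defocusing NLS. Because NLS and mKdV share the Lax operator $\Lambda_\varphi$, the spectral part carries over verbatim.

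That theory gives the following for $\epsilon_0=\epsilon_0(d,\kappa,\tau,h)$ small.
\begin{enumerate}[(i)]
\item The spectrum is purely absolutely continuous, and the operator is reflectionless. Reflectionlessness follows from reducibility, or from Kotani theory on the whole spectrum.
\item The gaps are labelled by $n\in\Z^d\setminus\{0\}$. The gap $G_n$ sits near $\pi\langle n,\omega\rangle$ (up to normalization), and its length obeys $\gamma_n\le \epsilon\, e^{-h'|n|}$ with $h'<h$. This is the Damanik--Goldstein exponential gap decay, proved via the Fourier-coefficient/resonance analysis of the reducibility transformation.
\end{enumerate}
Combining the gap positions with the Diophantine condition gives the separation bound
\[
\eta_{nm}\gtrsim \kappa|n-m|^{-\tau}-\gamma_n-\gamma_m .
\]
This is still of order $\kappa|n-m|^{-\tau}$ thanks to the exponential smallness of the $\gamma$'s.

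\emph{Verifying the Craig-type conditions.} This is the step I expect to be the main work. I would handle the three factors as follows.

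\emph{The products $C_j$.} I would show $C_n$ is uniformly bounded. Take $\lambda\in G_n$. Each factor $\frac{a_k-\lambda}{b_k-\lambda}$ (or its mirror) equals $1+O(\gamma_k/\mathrm{dist}(\lambda,G_k))$. Hence
\[
\log C_n^2\lesssim \sum_{m\neq n}\frac{\gamma_m}{\eta_{nm}}\lesssim \sum_m \epsilon\, e^{-h'|m|}\kappa^{-1}|n-m|^{\tau}.
\]
Using $|n-m|^\tau\le (1+|n|)^\tau(1+|m|)^\tau$, this is bounded by $C\,(1+|n|)^\tau$. That bound is not uniform, so I would refine the split.
\begin{itemize}
\item For $|m|\le|n|/2$, use the spatial distance directly: $\eta_{nm}\gtrsim \kappa|n-m|^{-\tau}$, while $\gamma_m$ is not necessarily small but is summable.
\item Better, I would symmetrize using $\gamma_n^{1/2}$. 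In \eqref{eqn:craigCond-1} the factor $\gamma_n^{1/2}\lesssim e^{-h'|n|/2}$ absorbs any polynomial growth $e^{C(1+|n|)^\tau\epsilon}$ coming from $C_n^2$, provided $\tau<1$ is arranged or $\epsilon$ is small relative to $h'$.
\end{itemize}
In general I would use the sharper bound $\log C_n^2\lesssim \epsilon\,\log(1+|n|)$-type estimates that follow from the exponential decay. Concretely, the sum $\sum_m e^{-h'|m|}|n-m|^\tau$ is dominated by small $|m|$, where $|n-m|^\tau\approx |n|^\tau$. This gives $C_n^2\le e^{C\epsilon|n|^\tau}$.

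\emph{The remaining factors.} The factor $1+\eta_{n0}^3$ is bounded by $(1+|n|)^3$, since gap positions grow linearly in $n$. Collecting these, \eqref{eqn:craigCond-1} is bounded by
\[
\sup_n e^{C\epsilon|n|^\tau}(1+|n|)^3 e^{-h'|n|/2}\sum_m e^{-h'|m|/2}|n-m|^\tau<\infty .
\]
Similarly, \eqref{eqn:craigCond-3} becomes $\sum_n e^{C\epsilon|n|^\tau}(1+|n|)^2 e^{-h'|n|/2}<\infty$.

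\emph{The delicate point.} The uncontrolled case would be $\tau\ge1$, where $e^{C\epsilon|n|^\tau}$ could beat $e^{-h'|n|/2}$. To handle it I would use the cruder but uniform estimate instead. For $\lambda\in G_n$ and $m$ with $|m|>|n|/2$, the contribution to $\log C_n^2$ is at most $\sum_{|m|>|n|/2}\gamma_m/\eta_{nm}$, which is exponentially small in $|n|$. For $|m|\le|n|/2$, the distance satisfies $\eta_{nm}\gtrsim |n|$ because the gaps are roughly evenly spread by $\omega$. The contribution is then $O(\epsilon/|n|)$. This makes $C_n$ uniformly bounded and closes both conditions. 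Theorem~\ref{thm:main1} then yields the conclusion.
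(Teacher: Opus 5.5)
Your overall reduction is the same as the paper's. You check that $\varphi$ is uniformly almost periodic and feed the direct spectral theory of \cite{FLLZ} into Theorem~\ref{thm:main1}. That theory, Lemma~\ref{lem:ac}, gives purely a.c.\ spectrum, $\gamma_k\le\epsilon_0^{1/2}e^{-2\pi r|k|}$, polynomial gap separation, and the linear bound $\eta_{0k}<c|k|$. The paper does not re-derive \eqref{eqn:craigCond-1} and \eqref{eqn:craigCond-3}; it takes them from \cite{FLLZ}, using the linear bound on $\eta_{0k}$ for the extra factors $1+\eta_{j0}^3$ and $1+\eta_{0\ell}^2$.

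Your hand verification of these conditions has a genuine gap at the factors $C_n$. The step meant to close the argument is ``for $|m|\le|n|/2$ one has $\eta_{nm}\gtrsim|n|$ because the gaps are roughly evenly spread.'' This is false. For $d\ge2$ the labels $\langle k,\omega\rangle$ are dense in $\R$, so $G_n$ and $G_m$ with $|n-m|\ge|n|/2$ can lie at distance of order $\kappa^2|n-m|^{-2\tau}$ (e.g.\ $m=0$ and $|\langle n,\omega\rangle|$ small). The target ``$C_n$ uniformly bounded'' is also false in general. If $G_0$ is open and gaps $G_n$ with $\langle n,\omega\rangle\to0$ accumulate at its edge, then $C_n^2\ge 1+\gamma_0/(\eta_{n0}+\gamma_n)\to\infty$. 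Your fallback $\log C_n^2\le\sum_m\gamma_m/\eta_{nm}\lesssim\epsilon(1+|n|)^{\tau}$ does not close either; with the correct separation it is really $(1+|n|)^{2\tau}$. The Diophantine condition forces $\tau\ge d\ge 2$, so $e^{C\epsilon|n|^{2\tau}}$ eventually beats $\gamma_n^{1/2}\lesssim e^{-\pi r|n|}$ however small $\epsilon$ is, and ``arranging $\tau<1$'' is impossible.

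What is true, and all that is needed, is subexponential growth of $C_n$. The loss comes from using $\log(1+x)\le x$ on ratios $\gamma_m/\eta_{nm}$ that can be huge. Use instead $\log(1+x)\le x^\delta/\delta$ for $0<\delta\le1$, together with $\lambda-b_m\ge\eta_{nm}$ for $\lambda\in G_n$, $G_m<G_n$ (and symmetrically). Then
\[
\log C_n^2\le\frac1\delta\sum_{m\ne n}\Big(\frac{\gamma_m}{\eta_{nm}}\Big)^{\delta}\lesssim_\delta\sum_{m} e^{-2\pi r\delta|m|}|n-m|^{2\tau\delta}\lesssim(1+|n|)^{2\tau\delta}.
\]
Taking $\delta<1/(2\tau)$ gives $C_n^2\le\exp\big(C(1+|n|)^\theta\big)$ with $\theta<1$. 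The factor $\gamma_n^{1/2}$ absorbs this together with the polynomial factors $(1+\eta_{n0})^3$ and $\sum_m\gamma_m^{1/2}/\eta_{nm}\lesssim(1+|n|)^{2\tau}$, so both Craig-type conditions follow.

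A smaller point: your separation bound $\eta_{nm}\gtrsim\kappa|n-m|^{-\tau}-\gamma_n-\gamma_m$ does not follow from ``$G_n$ sits near $\pi\langle n,\omega\rangle$.'' The gap locations are only known up to errors that are not small compared to $\kappa|n-m|^{-\tau}$. The rigorous bound in Lemma~\ref{lem:ac} is $\kappa^2|n-m|^{-2\tau}$ up to constants, coming from gap labelling plus H\"older-$\tfrac12$ continuity of the rotation number. Only polynomial separation is used, so substituting this exponent costs nothing.
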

    
\emph{Outline of the paper:}  In Section~\ref{sectionRicatti}, we recall the Schur functions associated with Dirac operators and derive their time dependence for a solution of \eqref{eqn:mKdV}. In Section \ref{sec:dubrovinVector}, we specialize to the reflectionless setting and study the Dubrovin equations that govern the time evolution of Dirichlet eigenvalues. In Section~\ref{sectionAbel}, we study the generalized Abel map and show that it transforms translation and mKdV flows into linear flows. In Section~\ref{sec.proofMain}, we prove Theorem \ref{thm:main1} and Corollary \ref{cor:main2}.

\section{Ricatti equations for the Schur functions} \label{sectionRicatti}
Let $\varphi \in L^\infty(\bbR)$. The Weyl solutions at $z\in \bbC \setminus \sigma_\ess(\Lambda_\varphi)$ are formal solutions of the eigenvector equation
\[\Lambda_\varphi \Psi_\pm=z\Psi_\pm,\]
where \(\Psi_\pm=\begin{bmatrix}\psi^1_\pm\\\psi^2_\pm\end{bmatrix}\in L^2(\R_\pm,\C^2)\); note that each Weyl solution is only required to be square-integrable on one half-line, and by general principles, it is uniquely determined up to a multiplicative constant. Recall the definition of Schur functions
\begin{align}
    s_+=\frac{\psi_+^1}{\psi_+^2},\qquad 
    s_-=\frac{\psi_-^2}{\psi_-^1}.
\end{align}
In a time-dependent setting, for a family of Dirac operators $\Lambda_{u(\cdot,t)}$ parametrized by $t \in I$ with essential spectrum $\sE = \sigma_\ess(\Lambda_{u(\cdot,t)})$ independent of $t$, we obtain a family of Weyl solutions, so we obtain a family of Schur functions $s_\pm$, which are functions of $x\in\bbR$, $t \in I$, $z \in \bbC \setminus \sE$.

For fixed $x,t$, they are meromorphic functions of $z \in \bbC \setminus \sE$ and obey $s_\pm(x,t,z) = 1 / \overline{ s_\pm(x,t,\bar z)}$, and for $z \in \bbC_+$, they obey $\lvert s_\pm(x,t,z)\rvert < 1$. 

From the eigenvector equation, the following Riccati equations are derived 
\cite{EGL,EFGL22}, which describe the $x$-evolution of the Schur functions:
\begin{align}\label{eqn:ricattiSpace}
    &-i\partial_x s_+ = u -2zs_+ + \overline{u}s_+^2,\\
    &-i\partial_x s_- = - u s_-^2  + 2zs_- - \overline{u}.
\end{align}
We now derive their $t$-evolution under the mKdV flow: 
\begin{lemma}\label{lem:RicattiTime}
    Let \(u\) be a solution of \eqref{eqn:mKdV} on a time interval $I$. 
   If the solution satisfies
    \[
    \sup_{t\in I} \sup_{x\in \bbR} \lvert u(x,t) \rvert + \lvert \partial_x^2 u(x,t) \rvert < \infty
    \]
    for some time interval $I$, then at every $t\in I$ and $x\in \bbR$,
    \[\partial_t s_+=b+2as_+-cs_+^2,\]
    \[\partial_t s_-=c-2as_--bs_-^2,\]
    where
    \begin{align*}
a & =\frac{1}{4}(\overline{u}\partial_x u - u\partial_x\overline{u})-\frac{iz}{2}|u|^2-iz^3, \\
b & = -\frac{i}{4}\partial_x^2 u +\frac{i}{2}|u|^2 u - \frac{z}{2}\partial_x u+ iz^2 u, \\
c & =\frac{i}{4}\partial_x^2\overline{u}-\frac{i}{2}|u|^2\overline{u}-\frac{z}{2}\partial_x\overline{u} - iz^2\overline{u}.
\end{align*}
\end{lemma}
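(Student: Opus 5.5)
The approach is the standard Lax-pair argument. Write the eigenvalue equation $\Lambda_{u}\Psi = z\Psi$ as a first-order system $\partial_x \Psi = U\Psi$, where
\[
U=\begin{bmatrix} -iz & iu\\ -i\overline{u} & iz\end{bmatrix},
\]
which follows from $i\psi^{1\prime}+u\psi^2=z\psi^1$ and $-i\psi^{2\prime}+\overline{u}\psi^1=z\psi^2$. Then introduce the time matrix
\[
V=\begin{bmatrix} a & b\\ c & -a\end{bmatrix}
\]
with $a,b,c$ as in the statement. The first step is a direct algebraic verification of the zero-curvature identity $\partial_t U-\partial_x V+[U,V]=0$, using \eqref{eqn:mKdV} and its conjugate. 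The diagonal entry gives $\partial_x a = i(u c+\overline{u} b)$, up to sign conventions. The off-diagonal entries reduce, at order $z^0$, exactly to $\partial_t u = -u'''+6|u|^2u'$; the higher powers of $z$ cancel identically, and this is what fixes the coefficients in $a,b,c$. This is routine but tedious, and it is where any sign error in $a,b,c$ would surface.

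The second step is to show that for each fixed $t$ the Weyl solution can be normalized so that it also satisfies $\partial_t\Psi_\pm = V\Psi_\pm$. For $z\in\C_+$, fix the Weyl solution $\Psi_+(\cdot,t)$ at time $t$ and set $\Phi=\partial_t\Psi_+ - V\Psi_+$, interpreting $\Psi_+(x,t)$ via the fundamental matrix $T(x,t)$ of $\partial_x - U$, normalized at $x=0$, applied to a time-dependent initial vector. Zero curvature gives $\partial_x\Phi = U\Phi$, so $\Phi$ solves the same ODE. Because the hypothesis bounds $u$, $\partial_x u$ (by interpolation between $u$ and $\partial_x^2 u$) and $\partial_x^2 u$ uniformly, $V$ is bounded in $x$. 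Hence $V\Psi_+\in L^2(\R_+)$, and so is $\partial_t\Psi_+$, once differentiability in $t$ of the Weyl solution is justified. Then $\Phi$ is an $L^2(\R_+)$ solution, so $\Phi=\kappa(t)\Psi_+$ by uniqueness of the Weyl solution for $z\notin\R$.

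The main obstacle is this $t$-regularity and square-integrability of $\partial_t\Psi_+$. The plan is to avoid it: differentiate the Schur function directly. Let $\tilde\Psi$ solve $\partial_x\tilde\Psi=U\tilde\Psi$ with $\tilde\Psi(x_0,t)=\Psi_+(x_0,t)$ at a fixed $x_0$, and evolve it in $t$ by $\partial_t\tilde\Psi=V\tilde\Psi$. Zero curvature makes this compatible, and the $x$-ODE is preserved. Then show that $\tilde\Psi(\cdot,t)$ stays in $L^2(\R_+)$. For this, use an energy estimate on $W=\tilde\Psi$ against the decaying direction, or equivalently check that the quantity $\lvert s\rvert<1$, with $s=\tilde\psi^1/\tilde\psi^2$, is preserved. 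Indeed, the Riccati flow in $t$ maps the unit disk into itself, since the $\tilde\Psi$-flow preserves the sign of the Wronskian-type form $\lvert\psi^2\rvert^2-\lvert\psi^1\rvert^2$ up to boundary terms. Alternatively, argue by continuity and analyticity in $z$ from large $\Im z$, where $V\approx -iz^3\sigma_3$ dominates. Granting that $\Psi_\pm$ satisfies $\partial_t\Psi=V\Psi$ modulo a scalar multiple, the conclusion is immediate. The scalar $\kappa$ drops out of ratios, and
\[
\partial_t s_+=\frac{(a\psi^1+b\psi^2)\psi^2-\psi^1(c\psi^1-a\psi^2)}{(\psi^2)^2}=b+2as_+-cs_+^2 .
\]
The same computation with $s_-=\psi^2_-/\psi^1_-$ gives $c-2as_--bs_-^2$. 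The formulas then extend from $z\in\C_+$ to all $z\in\C\setminus\sE$ by the reflection $s_\pm(z)=1/\overline{s_\pm(\bar z)}$ and meromorphy.
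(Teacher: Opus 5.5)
Your architecture matches the paper's. You use the Lax pair $(U,V)$ with $V=\begin{bmatrix} a&b\\ c&-a\end{bmatrix}$, which is the paper's $\widetilde V_3$ after $q=iu$, $p=-i\overline{u}$. You seed a solution $\Phi$ of the combined $x$/$t$ system with the Weyl vector at $t=0$, propagate the Weyl property (the paper does not prove this but imports it from [FLLZ, Lemma 5.4], i.e.\ Rybkin's argument), and finish with the quotient rule, which you do correctly.

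\textbf{First gap: the zero-curvature normalization.} The step you postponed as ``routine'' fails. The $(1,2)$ entry of $\partial_tU-\partial_xV+[U,V]=0$ reads $iu_t=b_x+2izb+2iua$. The $z^3,z^2,z^1$ coefficients cancel, but the $z^0$ coefficient gives
\[
iu_t=-\tfrac{i}{4}u'''+\tfrac{i}{2}(|u|^2u)'+\tfrac{i}{2}u(\overline{u}u'-u\overline{u}')=\tfrac{i}{4}\bigl(-u'''+6|u|^2u'\bigr).
\]
So the stated $a,b,c$ are the Lax partner of $4u_t=-u'''+6|u|^2u'$, not of \eqref{eqn:mKdV}. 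For \eqref{eqn:mKdV} you need $4V$, and the conclusion becomes $\partial_ts_+=4(b+2as_+-cs_+^2)$ and $\partial_ts_-=4(c-2as_--bs_-^2)$.

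The exact plane wave $u=\epsilon e^{i(kx-\omega t)}$ with $\omega=-k^3-6k\epsilon^2$ confirms this. There $s_+=\sigma(z)e^{i(kx-\omega t)}$ with $\epsilon\sigma^2-(k+2z)\sigma+\epsilon=0$ and $|\sigma|<1$. One computes $b+2as_+-cs_+^2=\tfrac{i}{4}(k^3+6k\epsilon^2)s_+=\tfrac14\partial_ts_+$.

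The paper's proof has the same slip: its $\widetilde V_3$ generates $\partial_tq=-\frac14q'''+\frac32pqq'$, as in the display in its introduction. This is harmless downstream, since it only rescales $\Xi$ and the time frequencies. But your claim that the $z^0$ terms reduce exactly to \eqref{eqn:mKdV} is false. The lemma can only be proved with the factor $4$, or for the rescaled equation.

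\textbf{Second gap: propagation of the Weyl property.} Your justification of this key step is incorrect. The $t$-Riccati flow at fixed $x$ does not preserve the unit disk for nonreal $z$. One has
\[
\partial_t(|\psi^2|^2-|\psi^1|^2)=-2\Re(a)\,(|\psi^1|^2+|\psi^2|^2)+2\Re\bigl(c\overline{\psi^2}\psi^1-b\overline{\psi^1}\psi^2\bigr),
\]
with $\Re(a)=\frac12|u|^2\Im(z)+\Im(z^3)$. This has no fixed sign on $\C_+$. Both terms vanish identically only for real $z$, where $c=\overline{b}$ and $a\in i\R$. Already for $u\equiv0$ the flow is $\partial_ts=-2iz^3s$, so $|s|$ grows like $e^{2t\Im(z^3)}$ whenever $\Im(z^3)>0$.

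Your large-$\Im(z)$ alternative can be made to work only in suitable sectors, separately for each time direction, followed by analytic continuation in $z$. You have not set any of that up.

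The missing observation is simpler, and you already had its ingredient. For each fixed $x$, $\partial_t\Phi=V\Phi$ is a linear ODE with $\sup_{x,t}\|V(x,t,z)\|=C(z)<\infty$, using your interpolation bound on $u'$. Gronwall then gives $|\Phi(x,t)|\le e^{C|t|}|\Phi(x,0)|$ for every $x$. Hence $\Phi(\cdot,t)\in L^2(\R_+)$, because $\Phi(\cdot,0)$ is the Weyl solution. No $t$-differentiability of $\Psi_+$ and no disk invariance is needed.

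It follows that $\Phi(\cdot,t)$ is a nonzero multiple of $\Psi_+(\cdot,t)$. Its second component does not vanish, since $|s_+|<1$ on $\C_+$. Your quotient computation and reflection argument then finish the proof, with the factor $4$.

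A smaller point: prescribing $\tilde\Psi(x_0,t)=\Psi_+(x_0,t)$ for all $t$ while also imposing $\partial_t\tilde\Psi=V\tilde\Psi$ is overdetermined. Data should be prescribed at a single point $(x_0,t_0)$.
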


\begin{proof}
Let $G_3,F_2,H_2$ denote the differential expressions in Appendix~\ref{sec:AKNS}.  Let \(\widetilde{V}_3=V_3\) be the differential expression in the homogeneous case, then we have
\begin{align}\label{eqn:V3}
    \widetilde{V}_3=i\begin{bmatrix}-G_3&F_2\\-H_2&G_3\end{bmatrix}
\end{align}
where
\[G_3=\frac{i}{4}(p\partial_xq-q\partial_xp)+\frac{z}{2}pq+z^3,\]
\[F_2=\frac{i}{4}\partial_x^2q-\frac{i}{2}pq^2+\frac{z}{2}\partial_xq-iz^2q,\]
\[H_2=-\frac{i}{4}\partial_x^2p+\frac{i}{2}p^2q+\frac{z}{2}\partial_xp+iz^2p.\]
As described in Appendix~\ref{sec:AKNS}, we specialize to the defocusing case by taking $q = i{u}$ and $p = - i \overline{u}$, so we have
\begin{align}\label{eqn:V-3}
\widetilde{V}_3=\begin{bmatrix}
    \frac{1}{4}(\overline{u}\partial_x u -u\partial_x\overline{u})+\frac{z}{2}|u|^2+z^3&-\frac{i}{4}\partial_x^2 u +\frac{i}{2}|u|^2 u- \frac{z}{2}\partial_x u + i z^2 u\\
    \frac{i}{4}\partial_x^2\overline{u} - \frac{i}{2}|u|^2\overline{u}-\frac{z}{2}\partial_x\overline{u} - iz^2\overline{u}&-\frac{1}{4}(\overline{u}\partial_x u - u\partial_x\overline{u})-\frac{z}{2}|u|^2-z^3
\end{bmatrix}.
\end{align}
Let \(\Phi(x,t,z)\) be the solution of the following initial value problem
\begin{align}\label{eqn:combinedEq}
    \left\{
    \begin{aligned}
    &\partial_x\Phi=U(z)\Phi\\
    &\partial_t\Phi=\widetilde{V}_3\Phi\\
    &\Phi(0,0,z)=\binom{s_+(0,0,z)}1,
    \end{aligned}
    \right.
\end{align}
where  \[U(z)=\begin{bmatrix}-iz& iu \\ -i\overline{u} & iz\end{bmatrix}.\]
Existence of the solution is guaranteed by the zero curvature condition \eqref{eq:zeroCur} (c.f. \cite[Remark 2.1]{LY2020JFA}).
Notice that the first equation in \eqref{eqn:combinedEq} is equivalent to the eigenvector equation 
\(\Lambda_{u(\cdot,0)} \Psi=z\Psi\). For $t=0$, the initial condition ensures that $\Phi(x,0,z)$ is a multiple of $\Psi_+(x,0,z)$.
As in the proof of \cite[Lemma 5.4]{FLLZ} (itself an adaptation of Rybkin's argument in the setting of the KdV equation \cite{Rybkin08}), we conclude that \(\Phi_+(x,t,z)\) satisfying \eqref{eqn:combinedEq} are Weyl solutions for $\Lambda_{u(\cdot,t)}$. By the definition of Schur functions, the $t$-evolution of $\Phi$ implies the $t$-evolution of $s_+$,
\begin{align}
\partial_t s_+ = & \left(-\frac{i}{4}\partial_x^2 u +\frac{i}{2}|u|^2u - \frac{z}{2}\partial_x u +i z^2 u \right)+2 \left(\frac{1}{4}(\overline{u}\partial_xu -u\partial_x\overline{u})+\frac{z}{2}|u|^2+z^3 \right)s_+\\&-\left(\frac{i}{4}\partial_x^2\overline{u}- \frac{i}{2}|u|^2\overline{u}-\frac{z}{2}\partial_x\overline{u}- i z^2\overline{u}\right)s_+^2,\end{align}
Analogously, replacing the initial condition by $\Phi(0,0,z) = \binom{1}{s_-(0,0,z)}$, we obtain the $t$-evolution of $s_-$,
\begin{align}
\partial_t s_-=& \left(\frac{i}{4}\partial_x^2\overline{u}- \frac{i}{2}|u|^2\overline{u}-\frac{z}{2}\partial_x\overline{u}- i z^2\overline{u}\right) -2 \left(\frac{1}{4}(\overline{u}\partial_xu -u\partial_x\overline{u})+\frac{z}{2}|u|^2+z^3 \right) s_-\\&- \left(-\frac{i}{4}\partial_x^2 u +\frac{i}{2}|u|^2u - \frac{z}{2}\partial_x u +i z^2 u \right)s_-^2.
\end{align}
\end{proof}

\section{Reflectionless Solutions and Dubrovin Vector Fields}\label{sec:dubrovinVector}

For a potential $\varphi \in L^\infty(\bbR)$, taking $x=0$ we obtain two Schur functions $s_\pm(z) = s_\pm(0,z)$. By a Borg--Marchenko theorem for Dirac operators, the functions $s_\pm(z) := s_\pm(0,z)$ encode the half-line restrictions of $\Lambda_\varphi$ to $\bbR_\pm$. We also note that, by general principles, $s_\pm$ satisfy the normalization condition \(\lim_{y\to \infty} s_\pm(iy)=0\).

For an unbounded closed subset \(\mathsf{E}=\R\setminus\left(\cup_{j}(a_j,b_j)\right)\) whose essential closure (w.r.t.\ Lebesgue measure) is equal to itself, let \(\mathcal{R}(\mathsf{E})\) be the set of potentials \(\varphi\) whose associated Dirac operator \(\Lambda_\varphi\) satisfies the following conditions:
\begin{enumerate}
    \item  \(1-s_+s_-\neq 0\) in \(\R\setminus\mathsf{E}\)
    \item the associated Schur functions satisfy \(\overline{s_+(\xi+i0)}=s_-(\xi+i0)\) for Lebesgue almost every \(\xi\in\mathsf{E}\)
\end{enumerate}
These two properties say that $\sigma(\Lambda_\varphi) = \mathsf{E}$ and that $\Lambda_\varphi$ is reflectionless. 
Let \(\mathcal{S}(\mathsf{E})\) be the set of pairs of Schur functions \((s_+,s_-)\) satisfying above conditions. We equip \(\mathcal{S}(\mathsf{E})\) with the compact open topology induced by the locally uniform convergence. 

\begin{lemma}[\cite{BLY1}]\label{lem:reflectionlessCond}
    Assume that \(\varphi\) is uniformly almost periodic. Then the Dirac operator \(\Lambda_\varphi\) is reflectionless on the absolutely continuous component of the spectrum. 
\end{lemma}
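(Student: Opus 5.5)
This is Kotani-type theory adapted to Dirac operators; it is cited from \cite{BLY1}. The plan is to combine two facts: the hull of an almost periodic potential is compact, and Kotani's identification of the Lyapunov exponent gives reflectionlessness.

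First, set up the ergodic family. Let $\Omega$ be the hull of $\varphi$, that is, the closure of its translates $\{\varphi(\cdot+y)\}_{y\in\R}$ in the uniform topology. Since $\varphi$ is uniformly almost periodic, $\Omega$ is a compact abelian group. The translation flow on $\Omega$ is minimal and uniquely ergodic with respect to Haar measure $\mu$. Consider the family $\Lambda_\omega$, $\omega\in\Omega$. By minimality and strong resolvent continuity, $\sigma(\Lambda_\omega)$ and $\sigma_{ac}(\Lambda_\omega)$ are independent of $\omega$; the latter uses the Last--Simon argument that the ac spectrum is constant on the hull under uniform limits of translates.

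Next, bring in Lyapunov exponents and Kotani theory for Dirac operators. Define the Lyapunov exponent $L(z)$ of the transfer matrices of $\partial_x\Phi=U(z)\Phi$. Prove or import the Kotani identity for the Dirac setting. Let $\Sigma_{ac}=\{\xi\in\R: L(\xi)=0\}$, taken up to null sets. This set is the essential support of the ac spectrum for $\mu$-a.e.\ $\omega$, by Ishii--Pastur--Kotani. The Kotani argument is the same as in the Schrödinger case. One relates $L(\xi+i\epsilon)$ to $\mathbb{E}\,\Re$ of the Weyl functions via the Thouless-type formula and the identity $\partial_\epsilon L$. Running this argument shows that for $\mu$-a.e.\ $\omega$ and a.e.\ $\xi\in\Sigma_{ac}$, the boundary values satisfy $\overline{s_+(\xi+i0)}=s_-(\xi+i0)$. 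Here the Schur functions play the role of $m_\pm$ in the Schrödinger case, with Cayley-transformed Weyl functions.

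Finally, pass from a.e.\ $\omega$ to every $\omega$, in particular $\varphi$ itself. This is the main obstacle. I would use the fact that the set of reflectionless pairs on a fixed set $\Sigma$ of positive measure is closed under locally uniform convergence of $(s_+,s_-)$ in $\C_+$. This is the Remling/Kotani compactness statement: reflectionlessness on $\Sigma$ is characterized by a weak-$*$ closed condition on the associated Herglotz measures, for example the vanishing of $\int_\Sigma$ of $\Re$ of a suitable combination. Since $\varphi$ is a uniform limit of translates of $\mu$-typical points, and $s_\pm$ depend continuously on the potential, it follows that $\Lambda_\varphi$ is reflectionless on $\Sigma_{ac}$. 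Because $\sigma_{ac}(\Lambda_\varphi)$ has essential support $\Sigma_{ac}$ by the constancy from the first step, this gives the claim. For the closedness, I would adapt Remling's Dirac version: reflectionless on $\Sigma$ is equivalent to $\int_{\Sigma}\log|1-s_+s_-|$-type identities holding with equality. The continuity then comes from the upper semicontinuity of the relevant integrals.
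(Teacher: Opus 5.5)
Your outline works, but it takes a different route from the one behind this lemma. The paper gives no proof and cites \cite{BLY1}. There the statement follows from a Remling-type theorem for canonical systems (hence for Dirac operators): every right limit is reflectionless on the a.c.\ spectrum of the half-line operator. Uniform almost periodicity gives sequences $x_n\to+\infty$ and $x_n\to-\infty$ with $\varphi(\cdot+x_n)\to\varphi$ uniformly. So $\Lambda_\varphi$ is its own right and left limit, and the conclusion holds directly for the given $\varphi$. That argument is deterministic and uses only the recurrence of $\varphi$. It needs no hull measure, no Lyapunov exponent, no Kotani theory, and no passage from $\mu$-a.e.\ $\omega$ to every $\omega$.

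Your route is the classical ergodic (Kotani-theoretic) argument: Kotani for a.e.\ $\omega$, closedness of the reflectionless class on the fixed set $\{L=0\}$, density of typical points in the hull, and Last--Simon constancy of $\Sigma_{ac}$ on the hull. It does go through. It also gives the extra identification $\Sigma_{ac}(\Lambda_\varphi)=\{L=0\}$. The cost is transferring three Schr\"odinger-case theorems to Dirac operators, whereas the cited route needs only the Remling-type theorem.

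One caveat concerns your closedness step. A condition like the vanishing of $\int_\Sigma \Re F$ is neither the right characterization nor stable under limits. Boundary values of $\Re F$ are conjugate functions and are not controlled by locally uniform convergence.

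The stable quantity is the argument function. For Herglotz $F$ one has $\frac{1}{\pi}\arg F(\xi+i0)\in[0,1]$. If $F_n\to F$ locally uniformly, these functions converge weak-$*$ in $L^\infty(\R)$. Hence the condition ``$\arg F=\pi/2$ a.e.\ on $\Sigma$'' passes to the limit.

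A single such $F$ gives only one real condition, while $\overline{s_+}=s_-$ is two. So apply this to $R$ for the gauge-rotated potentials $e^{i\theta}\varphi$, which send $s_\pm\mapsto e^{\pm i\theta}s_\pm$, with $\theta\in\{0,\pi/2,\pi\}$. Requiring $(1-e^{i\theta}s_+)(1-e^{-i\theta}s_-)/(1-s_+s_-)$ to be real for these three $\theta$ forces $s_+=\overline{s_-}$.

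Your transmission-coefficient idea, based on $|1-s_+s_-|^2=(1-|s_+|^2)(1-|s_-|^2)+|s_+-\overline{s_-}|^2$, can also be made to work. However, the needed semicontinuity on an arbitrary Borel set $\Sigma$ would require its own proof. Alternatively, you can simply import the compactness of reflectionless canonical systems from Remling's theory.
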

In particular, if \(\Lambda_\varphi\) has purely absolutely continuous spectrum that coincides with \(\mathsf{E}\), then \(\Lambda_\varphi\) is reflectionless on \(\mathsf{E}\) and thus \(\varphi\in\mathcal{R}(\mathsf{E})\). We also recall that whenever the spectrum $\sE$ is weakly homogeneous, for any $\varphi \in \cR(\sE)$, the operator $\Lambda_\varphi$ has purely a.c.\ spectrum \cite{PoltoratskiRemling}. We equip \(\mathcal{R}(\mathsf{E})\) with the strong resolvent topology.

In this section, we derive the Dubrovin vector fields for reflectionless data and prove that they are Lipschitz under the Craig-type conditions, with a suitable choice of metric. Let $R(z)$ be the resolvent function of the Dirac operator, that is, the upper left entry of the matrix Green's function which is the integral kernel of \((\Lambda_\varphi-z)^{-1}\) for $z\in \bbC \setminus \sigma(\Lambda_\varphi)$. The resolvent function is related to the Schur functions \(s_\pm\) by \begin{align}\label{eqn:resolventShur}
    R(z)=i\frac{(1-s_+)(1-s_-)}{1-s_+s_-}.
\end{align}
It is known that \(R(z)\) is real monotone in each spectral gap \((a_j,b_j)\subset \R\setminus \mathsf{E}\). There exists at most one  zero \(\mu_j\in(a_j,b_j)\) of \(R(z)\). Therefore, \(\mu_j\) is either a zero of \(1-s_+\) or a zero of \(1-s_-\). In the former case, let \(\epsilon_j=+1\) and in the latter case, let \(\epsilon_j=-1\). If \(R(z)\) is strictly positive on \((a_j,b_j)\), define \(\mu_j=a_j\) and identify \(\epsilon_j=+1\) with \(\epsilon_j=-1\). If \(R(z)\) is strictly negative, define \(\mu_j=b_j\) and identify \(\epsilon_j=+1\) with \(\epsilon_j=-1\). 

In this way we have defined the map 
\[
\mathcal{B}:\mathcal{R}(\mathsf{E})\to\mathcal{D}(\mathsf{E})
\]
which maps a reflectionless Dirac operator with spectrum $\sE$ to its Dirichlet data $(\mu_j,\epsilon_j)_{j\in J}$, which is an element of the torus of divisors
\[
\cD(\sE) = \prod_{j\in J}[a_j,b_j]\times\{+1,-1\} /_{\substack{(a_j,-1)\sim (a_j,+1)\\ (b_j,-1)\sim (b_j,+1)}}
\]
In the case that $\varphi$ is a finite gap potential, \(\sharp J\) is finite. Otherwise, \(\sharp J\) is countable. As $\cD(\sE)$ is a product of circles, equip $\mathcal{D}(\mathsf{E})$ with the product topology.

We will need a higher (3rd) order trace formula. Define the scalar fields
\begin{align}\label{eqn:scalarField}
    Q_k(y)=\sum_{j}(a_j^k+b_j^k-2\mu_j^k).
\end{align}

\begin{lemma}
The scalar fields $Q_1,Q_2,Q_3$ are well-defined, continuous functions on $\cD(\sE)$ if
\begin{equation}\label{eqnSummabilityForScalarFields}
\sum_j \gamma_j (1+\eta_{j0})^2 < \infty.
\end{equation}
In particular, they are well-defined and continuous whenever \eqref{eqn:craigCond-3} holds.
\end{lemma}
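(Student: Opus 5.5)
We will show that each series defining $Q_k$, $k=1,2,3$, converges absolutely and uniformly on $\cD(\sE)$. The key estimate is an elementary termwise bound. Since $\mu_j\in[a_j,b_j]$ and $b_j-a_j=\gamma_j$, each term $a_j^k+b_j^k-2\mu_j^k$ is a difference of values of $t\mapsto t^k$ at points of $[a_j,b_j]$:
\[
a_j^k+b_j^k-2\mu_j^k=(a_j^k-\mu_j^k)+(b_j^k-\mu_j^k).
\]
By the mean value theorem,
\[
|a_j^k+b_j^k-2\mu_j^k|\le 2k\,\gamma_j\max_{t\in[a_j,b_j]}|t|^{k-1}.
\]

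Next we bound $\max_{t\in[a_j,b_j]}|t|$ in terms of $\eta_{j0}$ and $\gamma_j$. Here $G_0$ is a fixed reference gap, and $\eta_{j0}$ measures its distance to $G_j$. For $t\in[a_j,b_j]$ we have
\[
|t|\le C_0+\eta_{j0}+\gamma_j,
\]
where $C_0$ depends only on the position of $G_0$ (for instance $C_0=|a_0|+\gamma_0$). The gap lengths $\gamma_j$ are bounded, since by \eqref{eqnSummabilityForScalarFields} we have $\gamma_j\to0$ and hence $\sup_j\gamma_j<\infty$. Therefore $|t|^{k-1}\le C(1+\eta_{j0})^{k-1}$.

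It follows that for $k=1,2,3$,
\[
\sup_{\cD(\sE)}|a_j^k+b_j^k-2\mu_j^k|\le C\gamma_j(1+\eta_{j0})^2,
\]
which is summable by \eqref{eqnSummabilityForScalarFields}. Each term $\mu_j\mapsto a_j^k+b_j^k-2\mu_j^k$ depends only on the $j$-th coordinate of the divisor and is continuous on the $j$-th circle. (The identification of $\epsilon_j=\pm1$ at the endpoints is harmless, because the term does not depend on $\epsilon_j$.) Each term is therefore continuous on $\cD(\sE)$ with the product topology. By the Weierstrass M-test, the series converges uniformly on $\cD(\sE)$, so $Q_k$ is well-defined and continuous.

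For the final claim, we derive \eqref{eqnSummabilityForScalarFields} from \eqref{eqn:craigCond-3}. We need two facts.
\begin{itemize}
\item[(a)] $C_\ell\ge1$. On $G_\ell$ each factor in \eqref{eqn:Cj} exceeds $1$: for $G_k<G_\ell$ we have $\lambda>b_k>a_k$, so $\frac{a_k-\lambda}{b_k-\lambda}=\frac{\lambda-a_k}{\lambda-b_k}>1$, and the factors for gaps to the right are handled symmetrically.
\item[(b)] $\gamma_\ell\le\gamma_\ell^{1/2}\sup_m\gamma_m^{1/2}$, with $\sup_m\gamma_m<\infty$. Since $C_\ell\ge1$, condition \eqref{eqn:craigCond-3} gives $\sum_\ell\gamma_\ell^{1/2}<\infty$, hence $\gamma_\ell\to0$ and the gap lengths are bounded.
\end{itemize}
Combining (a) and (b), and noting that $(1+\eta)^2\le2(1+\eta^2)$, we get
\[
\sum_\ell\gamma_\ell(1+\eta_{\ell0})^2\le C\sum_\ell C_\ell^2\gamma_\ell^{1/2}(1+\eta_{0\ell}^2)<\infty.
\]

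None of these steps is a real obstacle. The one point needing care is the uniform bound on $|t|$ over $[a_j,b_j]$ in terms of $\eta_{j0}$: for gaps far to the left or right, we must use that $[a_j,b_j]$ lies within distance $\eta_{j0}+\gamma_j$ of $G_0$.
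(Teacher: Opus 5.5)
Your proposal is correct and follows essentially the same route as the paper. Both arguments bound each term by a constant times $\gamma_j(1+\eta_{j0})^{k-1}$ and apply dominated convergence (your Weierstrass M-test), then derive \eqref{eqnSummabilityForScalarFields} from \eqref{eqn:craigCond-3} using $C_\ell\ge 1$ and bounded gap lengths; your estimate $\gamma_\ell\le\gamma_\ell^{1/2}\sup_m\gamma_m^{1/2}$ replaces the paper's term-by-term squaring, and you also spell out the bound on $|t|$ over $[a_j,b_j]$ and continuity in the product topology, which the paper leaves implicit.
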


\begin{proof}
The finite gap length condition \(\sum_j\gamma_j<\infty\) implies that \(Q_1\) is well-defined and continuous by the dominated convergence theorem. Since \(|Q_2|\leq 4\sum_j\gamma_j(1+\eta_{0j})\) and \(|Q_3|\leq 12\sum_j\gamma_j(1+\eta_{0j}^2)\), by the same reasoning, both \(Q_2,Q_3\) are well-defined and continuous.

Note that \eqref{eqn:craigCond-3} implies $\sum_j \gamma_\ell^{1/2} (1+\eta_{0\ell}^2) < \infty$. By squaring term-by-term, this implies
$\sum_\ell \gamma_\ell (1+\eta_{0\ell}^2)^2 < \infty$, so the estimate $\eta_{0\ell}^2 \le (1+\eta_{0\ell}^2)^2$ finishes the proof.
\end{proof}

From now on, let us assume that $u$ is a bounded solution of \eqref{eqn:mKdV}. Then the Dirichlet eigenvalues are functions of $x$ and $t$. 
Introduce the angle variable \(y_j\in[0,\pi)\) by the following change of variables
\begin{align}
    &\mu_j=a_j+(b_j-a_j)\sin^2(y_j)\label{eqn:angleVar-1}\\
    &\epsilon_j=\mathrm{sgn}(\sin(2y_j))\label{eqn:angleVar-2}.
    \end{align}
Let the topology on $\mathcal{D}(\mathsf{E})$ be the induced topology by the norm of $y=(y_j)\in\R^J$
\begin{align}\label{eq.vectorNorm}
    \Vert y\Vert=\sup_j\gamma_j^{1/2}\Vert y_j\Vert_{\mathbb{T}}.
\end{align}
From the product representation (c.f. \cite{FLLZ})
\[R(z)=i\prod_{j\in J}\sqrt{\frac{(z-\mu_j)^2}{(z-a_j)(z-b_j)}},\]
the dependence of \(y_j\) on \(x\) can be derived (c. f. \cite[Lemma 5.6]{FLLZ})
\begin{align}\label{eqn:dubrovin-1}
    \partial_x y_j=(-\Re[u] +\mu_j)W_j, 
\end{align}
where \(W_j\) is the product 
\begin{align}\label{eqn:prodW}
W_j=\prod_{k\neq j}\sqrt{\frac{(a_k-\mu_j)(b_k-\mu_j)}{(\mu_k-\mu_j)^2}}.\end{align}
We will derive the dependence of \(y_j\) on \(t\). As a preliminary,  by \cite[Lemma 5.3]{FLLZ}, 
\begin{align}
    &\Re[u] =-\frac{1}{2}Q_1(y),\label{eqn:scalarField-1}\\
    & \partial_x \Im[u] + (\Im[u])^2=\frac{1}{2}Q_2(y).\label{eqn:scalarField-2}
\end{align}

\begin{lemma}\label{lem:scalarField-3}
The following relation holds for \(Q_3\):
\begin{equation}\label{eqn:scalarField-3}
\frac{1}{2}\partial_x^2\Re[u] + \Im[u] \partial_x\Re[u] =\frac{Q_3}{3}-\frac{Q_1^3}{12}.
\end{equation}
\end{lemma}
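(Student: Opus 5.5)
We will derive \eqref{eqn:scalarField-3} by the same mechanism that gives \eqref{eqn:scalarField-1}--\eqref{eqn:scalarField-2} in \cite[Lemma 5.3]{FLLZ}, namely by comparing two asymptotic expansions of the resolvent function $R(z)$ as $z\to\infty$ along $i\R_+$, now carried one order further.

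\textbf{Step 1: expansion from the product formula.} Taking logarithms in
\[R(z)=i\prod_{j\in J}\sqrt{\frac{(z-\mu_j)^2}{(z-a_j)(z-b_j)}}\]
and expanding each factor, we get
\[\log\frac{(z-\mu_j)^2}{(z-a_j)(z-b_j)}=\sum_{k\ge1}\frac{a_j^k+b_j^k-2\mu_j^k}{k z^k}.\]
Summing over $j$ gives
\[\log(R/i)=\tfrac12\sum_{k=1}^{3}\frac{Q_k}{k z^k}+O(z^{-4}).\]
To interchange the sum over $j$ with the expansion up to this order, we need the summability condition \eqref{eqnSummabilityForScalarFields}, together with a tail bound for $|z|$ large on $i\R_+$. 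The terms $|a_j^k+b_j^k-2\mu_j^k|\lesssim\gamma_j(1+\eta_{0j})^{k-1}$ are summable for $k\le3$, and the remainder is controlled uniformly because on $i\R_+$ we have $|z-\lambda|\ge|z|$. Exponentiating then yields $R/i=1+r_1z^{-1}+r_2z^{-2}+r_3z^{-3}+O(z^{-4})$, with $r_k$ polynomial in $Q_1,Q_2,Q_3$. In particular,
\[r_3=\tfrac{Q_3}{6}+\tfrac{Q_1Q_2}{8}+\tfrac{Q_1^3}{48}.\]

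\textbf{Step 2: expansion from the Riccati equation.} Using \eqref{eqn:ricattiSpace}, we expand
\[s_+=\sum_{n\ge1}\alpha_n(2z)^{-n}, \qquad s_-=\sum_{n\ge1}\beta_n(2z)^{-n}.\]
Matching powers gives $\alpha_1=u$, $\alpha_2=-iu'$, $\alpha_3=-u''+|u|^2u$, and analogously $\beta_1=\bar u$, $\beta_2=i\bar u'$, $\beta_3=-\bar u''+|u|^2\bar u$. We then substitute these into \eqref{eqn:resolventShur} and expand $(1-s_+)(1-s_-)(1-s_+s_-)^{-1}$ through order $z^{-3}$. Its coefficients are differential polynomials in $u$, and the $z^{-3}$ coefficient involves $\Re u''$, $\Re u\,|u|^2$, and cross terms such as $u\bar u'$.

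\textbf{Step 3: matching coefficients.} Equating the two expansions at orders $z^{-1}$ and $z^{-2}$ reproduces \eqref{eqn:scalarField-1}--\eqref{eqn:scalarField-2}. At order $z^{-3}$ we obtain an identity expressing $Q_3$ in terms of $Q_1Q_2$, $Q_1^3$, and a differential polynomial in $u$. We then substitute $Q_1=-2\Re u$ and $Q_2=2(\partial_x\Im u+(\Im u)^2)$. The claim is that after this substitution all the $|u|^2\Re u$, $(\Re u)^3$, and $\Re u\,\partial_x\Im u$ terms cancel, except those combining into $-Q_1^3/12$. What remains should be $\tfrac12\partial_x^2\Re u+\Im u\,\partial_x\Re u$.

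The main obstacle is this algebraic bookkeeping at third order: keeping track of the $1/(2z)$ normalizations, the factor $i$ in $R$, and the real/imaginary decomposition $u=\Re u+i\Im u$. As a sanity check, we will test the final identity on constant $u$. If $u\equiv c\in\R$, the spectrum is $\R\setminus(-|c|,|c|)$ with a single gap and $\mu=-c$ (up to sign conventions), and we can verify $Q_3/3=Q_1^3/12$ directly. If $u\equiv i c$, we should get $Q_3=0$.

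A secondary technical point is justifying that the $x$-pointwise asymptotics of $s_\pm$ hold. This requires $u\in C^2$ with bounded derivatives, which holds in our setting; the standard argument of \cite{FLLZ} via the Riccati equations and $|s_\pm|<1$ on $\C_+$ adapts directly.
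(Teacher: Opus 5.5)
Your strategy is the paper's: expand $R$ to order $z^{-3}$ once from the product formula (your $r_3$ is exactly the paper's coefficient) and once locally in terms of $u$, then match coefficients. The only difference is the local side. The paper uses the Weyl--Titchmarsh functions $M_\pm$, the Clark--Gesztesy recursion and $R=2/(M_--M_+)$. You use \eqref{eqn:ricattiSpace} and \eqref{eqn:resolventShur} directly, which is a legitimate and more self-contained variant.

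There is, however, a sign error that matters, and the decisive cancellation is asserted rather than computed.

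\textbf{The sign error.} From \eqref{eqn:ricattiSpace} one gets $2zs_+=u+i\partial_x s_++\overline{u}s_+^2$ and $2zs_-=\overline{u}+us_-^2-i\partial_x s_-$. Hence $\alpha_2=iu'$ and $\beta_2=-i\overline{u}'$, not $-iu'$ and $i\overline{u}'$. Your $\alpha_3,\beta_3$ happen to be right only because the sign enters twice.

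\textbf{Why it matters.} With your values, matching at order $z^{-2}$ gives $-\partial_x\Im u+(\Im u)^2=Q_2/2$, which contradicts \eqref{eqn:scalarField-2}. Carried to order $z^{-3}$, they give $\frac12\partial_x^2\Re u-\Im u\,\partial_x\Re u=\frac{Q_3}{3}-\frac{Q_1^3}{12}$. That is the wrong sign exactly in the cross term the lemma is about. Your constant-$u$ checks cannot detect this, since every derivative term vanishes there.

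\textbf{The corrected computation.} With the corrected coefficients, expand
\[
\frac{R}{i}=1-s_+-s_-+2s_+s_--(s_++s_-)s_+s_-+O(|z|^{-4}).
\]
This yields
\[
\frac{R}{i}=1-\frac{\Re u}{z}+\frac{\partial_x\Im u+|u|^2}{2z^2}+\frac{\partial_x^2\Re u-2|u|^2\Re u+2\Im u\,\partial_x\Re u-2\Re u\,\partial_x\Im u}{4z^3}+o(|z|^{-3}).
\]
Equate the $z^{-3}$ coefficient with $r_3$ and substitute $Q_1=-2\Re u$ and $Q_2=2(\partial_x\Im u+(\Im u)^2)$. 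The $\Re u\,\partial_x\Im u$ and $\Re u\,(\Im u)^2$ terms cancel, leaving
\[
\frac{Q_3}{6}=\frac14\partial_x^2\Re u+\frac12\Im u\,\partial_x\Re u-\frac13(\Re u)^3 .
\]
This is the lemma, because $\frac{Q_1^3}{12}=-\frac23(\Re u)^3$. Since this cancellation is the whole content of the statement, you should carry it out rather than say what ``should'' remain.

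\textbf{A minor point.} Condition \eqref{eqnSummabilityForScalarFields} alone gives an $o(|z|^{-3})$ remainder in the product-formula expansion, not $O(|z|^{-4})$. This follows by dominated convergence, using $|\lambda|\le|z-\lambda|$ on $i\R_+$. That is enough for the matching.
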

\begin{proof}
    The asymptotic expansion of \(R(z)\) at the infinity in the cone \(C_\delta=\{z\in\C:\mathrm{arg}[z]\in(\delta,\pi-\delta)\}\) equals
    \[R(z)=i+\frac{i}{2}Q_1z^{-1}+\frac{i}{4}\left(Q_2+\frac{1}{2}Q_1^2\right)z^{-2}+i\left(\frac{Q_3}{6}+\frac{1}{48}Q_1^3+\frac{1}{8}Q_1Q_2\right)z^{-3}+O(|z|^{-4}).\]
    According to \cite[(4.62), (4.63)]{ClarkGesztesy}, \(R(z)=\frac{2}{M_--M_+}\), where 
    \[M_\pm=\pm i+\sum_{k=1}^Nm_{\pm,k}z^{-k}+o(|z|^{-N}).\]
    Taking \(N=3\) and listing the first few terms of \(m_{\pm,k}\),
    \begin{align}
        m_{+,1}&= - \Im[u] + i\Re[u]= iu,\\
        m_{-,1}&= - \Im[u] -i \Re[u] = - i \overline{u},\\
        m_{+,2}&=\frac{i}{2}\left(\partial_xm_{+,1}+m_{+,1}^2-2i m_{+,1}u\right)= - \frac{1}{2}\partial_x u + \frac{i}{2} u^2,\\
        m_{-,2}&=-\frac{i}{2}\left(\partial_xm_{-,1}+m^2_{-,1} + 2im_{-,1}\overline{u}\right)=-\frac{1}{2}\partial_x\overline{u} -\frac{i}{2}\overline{u}^2,\\
        m_{+,3}&=\frac{i}{2}\left(\partial_x m_{+,2}+2m_{+,1}m_{+,2}-(2im_{+,2}+m_{+,1}^2)\Re[u] + 2m_{+,2}\Im[u]\right)\\
        &=-\frac{i}{4}\partial_x^2 u - \frac{1}{4}\partial_x u^2 +\frac{i}{2}u^2 \Re[u],\\
        m_{-,3}&=-\frac{i}{2}\left(\partial_x m_{-,2}+2m_{-,1}m_{-,2}-(-2im_{-,2}+m_{-,1}^2) \Re[u] + 2m_{-,2}\Im[u]\right)\\
        &= \frac{i}{4}\partial_x^2\overline{u}-\frac{1}{4}\partial_x\overline{u}^2 - \frac{i}{2}\overline{u}^2 \Re[u].
    \end{align}
    We need to compare the coefficients of the term \(z^{-3}\) in the asymptotic expansion of \(R(z)=\frac{2}{M_--M_+}\). Since \(M_--M_+=-2i+\sum_{k=1}^3(m_{-,k}-m_{+,k})z^{-k}+o(|z|^{-4})\),
    \begin{align}
    \frac{2}{M_--M_+}-i&=\frac{1}{2}\left(\sum_{k=1}^3(m_{-,k}-m_{+,k})z^{-k}\right)\Bigg\{1+\left(-\frac{i}{2}\sum_{k=1}^2(m_{-,k}-m_{+,k})z^{-k}\right)\\
    &+\left(-\frac{i}{2}(m_{-,1}-m_{+,1})z^{-1}\right)^2
     \Bigg\}+o(|z|^{-3})
    \end{align}
Therefore, the coefficient of \(z^{-3}\) equals
\begin{align}&2i\left(\frac{Q_3}{6}+\frac{Q_1^3}{48}+\frac{Q_1Q_2}{8}\right)\\&\quad=(m_{-,3}-m_{+,3})-i(m_{-,1}-m_{+,1})(m_{-,2}-m_{+,2})-\frac{1}{4}(m_{-,1}-m_{+,1})^3\\
&\quad=i\Bigg(\frac{1}{2}\partial_x^2\Re[u] + \Im[u]\partial_x\Re[u] -  \Re[u]\partial_x\Im[u] - \Re[u] \Im[u]^2  - \Re[u]^3\Bigg)\\
&\quad=i\Bigg(\frac{1}{2}\partial_x^2\Re[u] + \Im[u]\partial_x \Re[u] +\frac{Q_1Q_2}{4}+\frac{1}{8}Q_1^3\Bigg).\end{align}
It follows that
\[
Q_3=\frac{1}{4}Q_1^3+3\left(\frac{1}{2}\partial_x^2\Re[u] + \Im[u]\partial_x\Re[u]\right). \qedhere
\]
\end{proof}

\begin{lemma}\label{lem:dubrovin-2} 
Assume that $u$ is a solution of the mKdV on a time interval $[0,T]$ with initial data in $\cR(\sE)$, satisfying     \begin{equation}\label{eqn:mKdVboundedq}
    {u},\partial_x{u},\partial_x^2 {u}\in L^\infty(\R\times[0,T]).
    \end{equation}
    Then $\Lambda_{u(t)} \in \cR(\sE)$ for all $t \in [0,T]$. 
Suppose also that \(\mu_j \notin  \{a_j,b_j\}\) and that \eqref{eqnSummabilityForScalarFields} holds. Then
    \begin{equation}
\partial_ty_j=\Bigg(
    \frac{Q_3}{6}+\frac{Q_1^3}{48}+\frac{Q_1Q_2}{8}+\frac{\mu_jQ_2}{4}+\frac{\mu_j Q_1^2}{8}+\frac{\mu_j^2Q_1}{2}+\mu_j^3
    \Bigg)W_j.
    \end{equation}
\end{lemma}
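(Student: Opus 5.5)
Two things need proving: that the flow preserves $\cR(\sE)$, and the Dubrovin equation for $\partial_t y_j$.

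\textbf{Invariance of $\cR(\sE)$.} This follows from Lemma~\ref{lem:RicattiTime}. Under \eqref{eqn:mKdVboundedq}, the Weyl solutions of $\Lambda_{u(\cdot,0)}$ evolve under \eqref{eqn:combinedEq} into Weyl solutions of $\Lambda_{u(\cdot,t)}$. Hence the spectrum is preserved, as in \cite[Lemma 5.4]{FLLZ}; this is isospectrality of the Lax pair.

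For reflectionlessness, we use the Riccati equations in $t$. If $\overline{s_+}=s_-$ a.e.\ on $\sE$ at time $0$, conjugating the $s_+$ equation gives
\[
\partial_t \overline{s_+} = \bar b + 2\bar a\,\overline{s_+} - \bar c\,\overline{s_+}^2 .
\]
For real $z$ one checks directly from the formulas that $\bar b = -c$, $\bar c=-b$ and $\bar a=-a$. So $\overline{s_+}$ satisfies the same ODE as $s_-$, namely $\partial_t s_- = c-2as_- - bs_-^2$.

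This holds for the boundary values at a.e.\ $\xi\in\sE$. The coefficients are polynomial in $z$, so boundary values can be taken inside the ODE in integrated form. Uniqueness for this ODE then gives $\overline{s_+(\xi+i0,t)}=s_-(\xi+i0,t)$ for all $t$. The condition $1-s_+s_-\neq0$ on the gaps follows because the spectrum is $\sE$.

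\textbf{Dubrovin equation.} Let $R(x,t,z)$ be the diagonal Green's function, and write $\mu_j=\mu_j(t)$ with $\mu_j\in(a_j,b_j)$. Suppose $\epsilon_j=+1$, so $s_+(\mu_j)=1$; the case $\epsilon_j=-1$ is symmetric. Differentiate $s_+(t,\mu_j(t))=1$ in $t$:
\[
\partial_t s_+ + \dot\mu_j\,\partial_z s_+ = 0 .
\]
By Lemma~\ref{lem:RicattiTime} at $s_+=1$,
\[
\partial_t s_+ = b+2a-c .
\]
Taking real and imaginary parts of $u$ turns this into an explicit expression $-2i\,P(\mu_j)$, with $P$ a cubic polynomial in $\mu_j$ whose coefficients involve $u,\partial_xu,\partial_x^2u$.

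The $x$-Dubrovin equation \eqref{eqn:dubrovin-1} came from the same argument in $x$, where $-i\partial_x s_+$ at $s_+=1$ gives $2(\Re u-\mu_j)$. The $\partial_z s_+$ factor, expressed through the product formula for $R$ and the change of variables \eqref{eqn:angleVar-1}, produced the factor $W_j$. So the result has the form $\partial_t y_j = P(\mu_j)W_j$, with the normalization and sign fixed by comparison with \eqref{eqn:dubrovin-1}.

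Concretely,
\[
b+2a-c = -\tfrac{i}{2}\partial_x^2\Re u + i|u|^2\Re u - z\,\partial_x(i\Im u)\cdot(\ldots) + \ldots
\]
I would compute this honestly. I expect
\[
P(\mu) = \mu^3 + \tfrac{\mu}{2}|u|^2 + \ldots - \tfrac14\partial_x^2\Re u + \ldots
\]
up to normalization, containing the terms $\mu^2\Re u$, $\mu(\partial_x\Im u+|u|^2)$, $\partial_x^2\Re u$ and $\Im u\,\partial_x\Re u$.

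\textbf{Trace-formula substitution.} The final step rewrites these coefficients via the trace formulas \eqref{eqn:scalarField-1}, \eqref{eqn:scalarField-2} and Lemma~\ref{lem:scalarField-3}:
\[
\Re u=-\tfrac{Q_1}{2}, \qquad \partial_x\Im u + (\Im u)^2 = \tfrac{Q_2}{2},
\]
\[
\tfrac12\partial_x^2\Re u + \Im u\,\partial_x\Re u = \tfrac{Q_3}{3}-\tfrac{Q_1^3}{12}.
\]
Together with $|u|^2=(\Re u)^2+(\Im u)^2$, these identify $P(\mu_j)$ with the stated cubic in $\mu_j,Q_1,Q_2,Q_3$. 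The summability condition \eqref{eqnSummabilityForScalarFields} guarantees that these $Q_k$ are well defined.

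The main obstacle I expect is this algebraic matching. The polynomial must depend on $\Im u$ only through the combination $\partial_x\Im u+(\Im u)^2$ and through $\Im u\,\partial_x\Re u$ as it enters $Q_3$; this is forced by gauge structure. Getting the constants and signs right, such as $\mu_j^2Q_1/2$, requires care with the factor relating $\dot\mu_j$ to $\dot y_j$. That factor is $2\gamma_j\sin y_j\cos y_j$, which combined with the $\partial_z s_+$ factor yields exactly $W_j$, as in \cite[Lemma 5.6]{FLLZ}.

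To be safe, I would first verify the $\mu_j$-independent constant by checking it against the $z^{-3}$ coefficient of $R$ from the proof of Lemma~\ref{lem:scalarField-3}. That coefficient is exactly $\frac{Q_3}{6}+\frac{Q_1^3}{48}+\frac{Q_1Q_2}{8}$, which suggests the identity
\[
P(\mu)=\Bigl[z^3 \tfrac{R(z)}{i}\Bigr]_{\text{polynomial part in } z \text{ expanded at }\mu} .
\]
This gives a shortcut: $P$ is the polynomial part of $z^3R(z)/i$ at infinity, evaluated at $z=\mu_j$.
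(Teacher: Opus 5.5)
Your strategy is the paper's. You evaluate the time Riccati equation of Lemma~\ref{lem:RicattiTime} at $z=\mu_j$, where $s_{\epsilon_j}=1$. You convert $\partial_t\mu_j$ into $\partial_t y_j$ through the product formula for $R$ and the change of variables. Then you substitute the trace formulas. The paper differentiates $R(\mu_j)=0$ rather than $s_{\epsilon_j}(\mu_j)=1$, but this is the same thing, since $\partial R|_{z=\mu_j}=-i\,\partial s_{\epsilon_j}|_{z=\mu_j}$ for $\partial=\partial_t,\partial_x,\partial_z$ (using $s_{-\epsilon_j}(\mu_j)\neq 1$). Your calibration against \eqref{eqn:dubrovin-1} is also legitimate: it gives $\partial_t y_j=\tfrac{i}{2}(b+2a-c)\big|_{z=\mu_j}W_j$ for either sign of $\epsilon_j$.

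However, as written the proposal does not prove the stated formula. The one computation that constitutes the lemma is left as ``I expect'' with ellipses. Your guessed $P$ also has the wrong sign on $\partial_x^2\Re[u]$: your own leading term $-\tfrac{i}{2}\partial_x^2\Re[u]$ in $b+2a-c=-2iP$ forces the coefficient $+\tfrac14$. Your ``shortcut'' $P=[z^3R(z)/i]_+$, evaluated at $\mu_j$, does in fact reproduce all four coefficients of the stated cubic; compare the expansion of $R$ in the proof of Lemma~\ref{lem:scalarField-3}. But checking only the constant term does not justify it. Without a derivation from the hierarchy structure, it is a conjecture, not a proof step.

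The missing computation is short. Using $\tfrac12(\overline{u}\partial_x u-u\partial_x\overline{u})=i(\Re[u]\partial_x\Im[u]-\Im[u]\partial_x\Re[u])$,
\[
(b+2a-c)\big|_{z=\mu}=i\Big(-\tfrac12\partial_x^2\Re[u]+|u|^2\Re[u]-\mu\,\partial_x\Im[u]+2\mu^2\Re[u]+\Re[u]\partial_x\Im[u]-\Im[u]\partial_x\Re[u]-\mu|u|^2-2\mu^3\Big).
\]
With $|u|^2=\Re[u]^2+\Im[u]^2$, the bracket regroups as
\[
-\Big(\tfrac12\partial_x^2\Re[u]+\Im[u]\partial_x\Re[u]\Big)+\Re[u]^3+(\Re[u]-\mu)\big(\partial_x\Im[u]+\Im[u]^2\big)-\mu\Re[u]^2+2\mu^2\Re[u]-2\mu^3 .
\]
This is exactly the form to which \eqref{eqn:scalarField-1}, \eqref{eqn:scalarField-2} and \eqref{eqn:scalarField-3} apply. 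Substituting them turns the bracket into $-2$ times the stated cubic, so $\partial_t y_j=\tfrac{i}{2}(-2i)P(\mu_j)W_j=P(\mu_j)W_j$.

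In the invariance part, your conjugation identities are misstated. For real $z$ one has $\bar a=-a$, but $\bar b=c$ and $\bar c=b$. With your signs the conjugated equation would read $\partial_t\overline{s_+}=-c-2a\overline{s_+}+b\overline{s_+}^2$, which is not the $s_-$ equation; with the correct signs your conclusion holds. The a.e.\ boundary-value issue is cleanest if you note that the $t$-flow at fixed $x$ acts on $s_\pm$ by a M\"obius map whose matrix is entire in $z$. Boundary values are then transported pointwise wherever they exist at time $0$. The paper itself simply cites the Rybkin-type argument from FLLZ at this point.
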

\begin{proof}
The preservation of the reflectionless property follows as in \cite{FLLZ} by an adaptation of an argument of Rybkin \cite{Rybkin08} in the Schr\"odinger case; see also \cite{LY2020JFA}. 
    By the definition, for \(y_j\notin\{0,\frac{\pi}{2}\}\), we have \(s_{\epsilon_j}(x,t,\mu_j)=1.\) Since \[R(z)=i(1-s_+)(1-s_-)/(1-s_+s_-),\] 
    applying Lemma \ref{lem:RicattiTime} gives 
    \[\partial_t R|_{z=\mu_j}=-i\partial_t s_{\epsilon_j}=-i\epsilon_j(b-c+2a)\]
    where \(a,b,c\) are the functions given in Lemma \ref{lem:RicattiTime}. Using the formulas for \(a,b,c\), we have
    \begin{align*}\partial_t R|_{z=\mu_j}=&\epsilon_j\Bigg(-\frac{1}{2}\partial_x^2\Re[u]+|u|^2\Re[u] - \mu_j\partial_x\Im[u] + 2\mu_j^2\Re[u] \\ & \quad +(\Re[u]\partial_x\Im[u] - \Im[u]\partial_x \Re[u])-\mu_j|u|^2-2\mu_j^3\Bigg).\end{align*}
    In order to represent this by the scalar fields \(Q_1,Q_2,Q_3\), we first rewrite it as follows,
    \begin{align*}\partial_tR|_{z=\mu_j}=&\epsilon_j\Bigg(-\frac{1}{2}\partial_x^2\Re[u] - \Im[u]\partial_x\Re[u] +\Re[u]^3+\Re[u](\partial_x\Im[u]+\Im[u]^2)\\& \quad -\mu_j( \partial_x\Im[u]+\Im[u]^2)-\mu_j\Re[u]^2+2\mu_j^2\Re[u]-2\mu_j^3\Bigg).\end{align*}
    Plugging in \eqref{eqn:scalarField-1}, \eqref{eqn:scalarField-2} and \eqref{eqn:scalarField-3}, we have 
    \[\partial_tR\vert_{z=\mu_j}=\epsilon_j\Bigg(
    -\frac{Q_3}{3}-\frac{Q_1^3}{24}-\frac{Q_1Q_2}{4}-\frac{\mu_jQ_2}{2}-\frac{\mu_j Q_1^2}{4}-\mu_j^2Q_1-2\mu_j^3
    \Bigg).\]
    For \(\mu_j\in (a_j,b_j)\), by the chain rule,
    \[\partial_t\mu_j=i\partial_t R(z)|_{z=\mu_j}\sqrt{(a_j-\mu_j)(b_j-\mu_j)} W_j 
    \]
    (recall that $W_j$ is defined by \eqref{eqn:prodW}).    Since 
    \[\partial_t\mu_j=(b_j-a_j)\sin(2y_j)\partial_ty_j=2\epsilon_j\sqrt{(b_j-\mu_j)(\mu_j-a_j)}\partial_t y_j,
    \]
    it follows that 
    \begin{align}\partial_t y_j&=\epsilon_j\left(-\frac{1}{2}\partial_t R|_{z=\mu_j}\right) W_j \\
    &=\Bigg(
    \frac{Q_3}{6}+\frac{Q_1^3}{48}+\frac{Q_1Q_2}{8}+\frac{\mu_jQ_2}{4}+\frac{\mu_j Q_1^2}{8}+\frac{\mu_j^2Q_1}{2}+\mu_j^3
    \Bigg) W_j. 
    \end{align}
For the case \(\mu_j=E\in\{a_j,b_j\}\), the arguments of \cite[Lemma 5.1]{FLLZ} apply to show that this happens in a discrete set. Then using \cite[Lemma 5.2]{FLLZ} and \cite[Lemma 5.7]{FLLZ}, the  Dubrovin vector fields extend to the boundaries. The condition \(\sum_{k\neq j,\eta_{jk}<1}\frac{\gamma_k}{\eta_{jk}}<\infty\) required by \cite{FLLZ} is clearly satisfied by \eqref{eqn:craigCond-1}.
\end{proof}
Equip the tangent space of \(\mathcal{D}(\mathsf{E})\) with the metric induced by the norm
\(\Vert y\Vert=\sup_{j}\gamma_j^{1/2}|y_j|\).
\begin{proposition}\label{prop:LipVecField}
Suppose that the spectrum \(\mathsf{E}\) satisfies the Craig-type conditions \eqref{eqn:craigCond-1}, \eqref{eqn:craigCond-3}. Then there exist Lipschitz vector fields
    \(\Psi(y)\) and \(\Xi(y)\) on $\cD(\sE)$ such that  for any solution $ u$ of the mKdV on a time interval $[0,T]$ with initial data in $\cR(\sE)$, satisfying     \eqref{eqn:mKdVboundedq}, the Dirichlet data evolve according to   
    \[\partial_xy=\Psi(y),\qquad \partial_ty=\Xi(y).\]
\end{proposition}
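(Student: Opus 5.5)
We define $\Psi$ and $\Xi$ directly from the Dubrovin formulas and then check that they are Lipschitz for the norm $\Vert y\Vert=\sup_j\gamma_j^{1/2}\Vert y_j\Vert_\T$. By \eqref{eqn:dubrovin-1} and \eqref{eqn:scalarField-1}, set
\[
\Psi_j(y)=\Bigl(\tfrac12 Q_1(y)+\mu_j\Bigr)W_j(y).
\]
By Lemma~\ref{lem:dubrovin-2}, set
\[
\Xi_j(y)=P_j(y)\,W_j(y),
\]
where
\[
P_j=\tfrac{Q_3}{6}+\tfrac{Q_1^3}{48}+\tfrac{Q_1Q_2}{8}+\tfrac{\mu_jQ_2}{4}+\tfrac{\mu_j Q_1^2}{8}+\tfrac{\mu_j^2Q_1}{2}+\mu_j^3 .
\]
Here $\mu_j=a_j+\gamma_j\sin^2 y_j$. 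We write $W_j$ in the angle variables, as in \cite{FLLZ}, so that it is manifestly smooth in $y_j$, including at the band edges $y_j\in\{0,\pi/2\}$. Lemma~\ref{lem:dubrovin-2}, together with its extension to the edges, then shows that every admissible solution satisfies $\partial_xy=\Psi(y)$ and $\partial_ty=\Xi(y)$. It remains to prove the Lipschitz property.

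\textbf{Boundedness and Lipschitz bounds for $W_j$.} We follow \cite{FLLZ}, where the NLS field has the same structure $(\text{polynomial in }\mu_j\text{ and }Q_k)\times W_j$. First we bound $W_j$. Each factor $\sqrt{(a_k-\mu_j)(b_k-\mu_j)}/|\mu_k-\mu_j|$ is compared with the factor $\sqrt{(a_k-\lambda)/(b_k-\lambda)}$ in \eqref{eqn:Cj}, or its reciprocal. Two consequences follow. The first is the uniform bound $|W_j|\le C_j\prod_{k\ne j}(1+\gamma_k/\eta_{jk})^{1/2}$, which is controlled by $C_j\exp(\sum_k\gamma_k/\eta_{jk})$. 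The second is the derivative bounds
\[
|\partial_{y_k}W_j|\lesssim C_j\frac{\gamma_k}{\eta_{jk}}, \qquad k\neq j,
\]
together with $|\partial_{y_j}W_j|\lesssim C_j\sum_{k}\gamma_k/\eta_{jk}$.

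\textbf{Lipschitz bounds for the scalar fields.} Next we control the $Q_k$. We have $\partial_{y_\ell}Q_k=-2k\mu_\ell^{k-1}\gamma_\ell\sin 2y_\ell$, so $|\partial_{y_\ell}Q_k|\lesssim \gamma_\ell(1+\eta_{0\ell}^{2})$ for $k\le 3$. Combining this with the norm $\Vert\cdot\Vert$ gives
\[
|Q_k(y)-Q_k(\tilde y)|\lesssim \sum_\ell\gamma_\ell^{1/2}(1+\eta_{0\ell}^2)\,\Vert y-\tilde y\Vert .
\]
The sum on the right is finite by \eqref{eqn:craigCond-3}, since $C_\ell\ge1$.

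\textbf{Assembling the estimate.} The weighted Lipschitz estimate for $\Xi$ requires
\[
\gamma_j^{1/2}|\Xi_j(y)-\Xi_j(\tilde y)|\le L\Vert y-\tilde y\Vert
\]
uniformly in $j$. We split the difference into two terms. The first is $(P_j(y)-P_j(\tilde y))W_j$, which is estimated by the $Q_k$ bounds together with $|\mu_j|^3\lesssim 1+\eta_{j0}^3$. The second is $P_j(\tilde y)(W_j(y)-W_j(\tilde y))$, which is estimated by
\[
\sum_\ell|\partial_{y_\ell}W_j|\,|y_\ell-\tilde y_\ell|\le \sum_\ell|\partial_{y_\ell}W_j|\,\gamma_\ell^{-1/2}\Vert y-\tilde y\Vert .
\]
Multiplying by $\gamma_j^{1/2}|P_j|\lesssim\gamma_j^{1/2}(1+\eta_{j0}^3)$ produces exactly
\[
C_j(1+\eta_{j0}^3)\sum_{\ell\ne j}\frac{\gamma_j^{1/2}\gamma_\ell^{1/2}}{\eta_{j\ell}},
\]
which is bounded by \eqref{eqn:craigCond-1}. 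The diagonal term $\partial_{y_j}W_j$ is handled with the same sum, using $\gamma_j\le\gamma_j^{1/2}\gamma_j^{1/2}$. The powers $C_j^2$ in \eqref{eqn:craigCond-1} absorb the fact that both $W_j$ and its derivative carry a factor of $C_j$, and the remaining factor of $C_j$ in the first term is absorbed into \eqref{eqn:craigCond-3}. The field $\Psi$ is treated identically; it only needs the lower power $1+\eta_{j0}$.

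\textbf{Main obstacle.} The hard step is the sharp derivative estimate for $W_j$ near the edges. When $\mu_k$ approaches $\mu_j$ across neighboring gaps, the individual factors are singular, and one must show that $\partial_{y_k}$ of the $k$-th factor is $O(\gamma_k/\eta_{jk})$ relative to the factor itself, uniformly in the position of $\mu_j\in[a_j,b_j]$. We plan to prove this first by writing the $k$-th factor as
\[
\Bigl(1+\frac{\gamma_k\cos^2y_k}{\mu_k-\mu_j}\Bigr)^{1/2}\Bigl(1-\frac{\gamma_k\sin^2 y_k}{\mu_k-\mu_j}\Bigr)^{1/2}
\]
and differentiating the logarithm.
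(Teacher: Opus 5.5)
Your architecture is the paper's. You define $\Psi_j,\Xi_j$ from the Dubrovin formulas, bound $W_j$ and its partial derivatives, get Lipschitz bounds for $Q_1,Q_2,Q_3$ from \eqref{eqn:craigCond-3}, and close with row sums $\sup_j\sum_k\gamma_j^{1/2}\gamma_k^{-1/2}\Vert\partial_{y_k}\Xi_j\Vert_\infty$. Your splitting $(\Delta P_j)W_j+P_j\,\Delta W_j$ is equivalent to the paper's direct computation of $\partial_{y_k}\Xi_j$, which also uses $\partial_{y_j}P_j=0$. The off-diagonal terms are handled correctly, but the diagonal term is not.

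\textbf{The diagonal estimate.} Your bound $|\partial_{y_j}W_j|\lesssim C_j\sum_k\gamma_k/\eta_{jk}$ loses the chain-rule factor $\partial_{y_j}\mu_j=\gamma_j\sin 2y_j$ and has the wrong power of $\eta_{jk}$. The $\mu_j$-derivative of the logarithm of the $k$-th factor of $W_j$ is $\frac{1}{2(\mu_j-a_k)}+\frac{1}{2(\mu_j-b_k)}-\frac{1}{\mu_j-\mu_k}=O(\gamma_k/\eta_{jk}^2)$. So the correct bound is $|\partial_{y_j}W_j|\le W_j\,\gamma_j\sum_{k\neq j}\gamma_k/\eta_{jk}^2$, which is the one the paper uses. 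With your bound the step cannot close. The diagonal entry you would have to control is $C_j(1+\eta_{j0}^3)\sum_k\gamma_k/\eta_{jk}$. Keeping only $k=0$ shows it is at least $\gamma_0\eta_{j0}^2$, which is unbounded as soon as the gaps are not confined to a bounded set. Condition \eqref{eqn:craigCond-1} only controls sums of $\gamma_j^{1/2}\gamma_k^{1/2}/\eta_{jk}$, and writing $\gamma_j=\gamma_j^{1/2}\gamma_j^{1/2}$ does not turn $\gamma_k$ into $\gamma_j^{1/2}\gamma_k^{1/2}$. The missing step is
\[
\gamma_j\sum_{k\neq j}\frac{\gamma_k}{\eta_{jk}^2}=\sum_{k\neq j}\Bigl(\frac{\gamma_j^{1/2}\gamma_k^{1/2}}{\eta_{jk}}\Bigr)^2\le\Bigl(\sum_{k\neq j}\frac{\gamma_j^{1/2}\gamma_k^{1/2}}{\eta_{jk}}\Bigr)^2 .
\]
After this, $C_j\ge 1$ and \eqref{eqn:craigCond-1} bound $C_j(1+\eta_{j0}^3)$ times the right-hand side by the square of the constant in \eqref{eqn:craigCond-1}. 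Alternatively, $\gamma_j|\sin 2y_j|=2\sqrt{(\mu_j-a_j)(b_j-\mu_j)}$ gives the sharper bound $|\partial_{y_j}W_j|\le 2W_j\gamma_j^{1/2}\sum_{k\neq j}\gamma_k^{1/2}/\eta_{jk}$, which plugs into \eqref{eqn:craigCond-1} directly.

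\textbf{Three smaller points.}
\begin{enumerate}
\item $W_j\le C_j$ holds directly. Each factor of $W_j$ is largest when $\mu_k$ sits at the edge of $G_k$ nearest $\mu_j$, where it equals the corresponding factor in \eqref{eqn:Cj} at $\lambda=\mu_j$. Your extra factor $\exp(\sum_k\gamma_k/\eta_{jk})$ is not uniformly bounded under the hypotheses: a tiny gap very close to a large one makes $\gamma_k/\eta_{jk}$ huge. Drop it.
\item What you call the main obstacle is not one. Exactly, $\partial_{y_k}\log W_j=\gamma_k\sin(2y_k)/(\mu_j-\mu_k)$, and $|\mu_j-\mu_k|\ge\eta_{jk}$.
\item In $(P_j(y)-P_j(\tilde y))W_j$, do not bound the coefficients by $|\mu_j|^3$. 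That would require $\sup_j\gamma_j^{1/2}C_j\eta_{j0}^3<\infty$, which the Craig-type conditions do not give. Instead, note that the coefficients of the $\Delta Q_k$ are at most quadratic in $\mu_j$, and the $\Delta\mu_j$ contribution carries a factor $\gamma_j$. So only $\sup_j\gamma_j^{1/2}C_j(1+\eta_{j0}^2)<\infty$ is needed, and \eqref{eqn:craigCond-3} provides it.
\end{enumerate}
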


\begin{proof}
Let \(\Psi\) be defined as
\(\partial_xy_j=\Psi_j(y)\) with \(\Psi_j(y)\) be given as \eqref{eqn:dubrovin-1}. Let \(\partial_ty_j=\Xi_j(y)\) with \(\Xi_j\) be given in Lemma \ref{lem:dubrovin-2}.
The rest of the proof follows the lines in \cite[Proposition 5.8]{FLLZ}. We include the argument for completeness.

The proof for \(\Psi\) remains to be the same as the case in \cite[Proposition 5.8]{FLLZ} using condition \eqref{eqn:craigCond-1}. We only need to prove the Lipschitz property for \(\Xi\). By the definition of \(\Xi_j\) and Lemma \ref{lem:dubrovin-2},
\[\partial_t y_j=\Xi_j(y)=\partial_ty_j=\Bigg(
    \frac{Q_3}{6}+\frac{Q_1^3}{48}+\frac{Q_1Q_2}{8}+\frac{\mu_jQ_2}{4}+\frac{\mu_j Q_1^2}{8}+\frac{\mu_j^2Q_1}{2}+\mu_j^3
    \Bigg)W_j,\]
    where \(W_j\) is defined in \eqref{eqn:prodW}. Note that
    \begin{align*}
    \frac{\partial W_j}{\partial y_j}=&\frac{\gamma_j\sin(2y_j)}{2}\sum_{k\neq j}\left(\prod_{k'\neq j,k}\sqrt{\frac{(a_{k'}-\mu_j)(b_{k'}-\mu_j)}{(\mu_{k'}-\mu_j)^2}}\right)\\
    &\times\frac{(b_k-\mu_j)(\mu_k-a_k)+(a_{k}-\mu_j)(\mu_k-b_k)}{\mathrm{sgn}\big[\mu_j-\mu_k\big](\mu_k-\mu_j)^2\sqrt{(a_k-\mu_j)(b_k-\mu_j)}}
    \end{align*}
    and
    \[\left|\frac{\partial W_j}{\partial y_j}\right|\leq C_j^2\gamma_j\sum_{\ell}\frac{\gamma_\ell}{\eta_{j\ell}^2}.\]
    For \(k\neq j\), one computes that
    \[\frac{\partial W_j}{\partial y_k}=\prod_{\ell\neq j}\sqrt{\frac{(a_\ell-\mu_j)(b_\ell-\mu_j)}{(\mu_\ell-\mu_j)^2}}\frac{\gamma_k\sin(2y_k)}{\mu_j-\mu_k}=W_j\frac{\gamma_k\sin(2y_k)}{\mu_j-\mu_k}\]
    and therefore
    \[\left|\frac{\partial W_j}{\partial y_k}\right|\leq\frac{C_j^2\gamma_k}{\eta_{jk}}.\]
    Note that by the relation \(\mu_j=a_j+\gamma_j\sin^2(y_j)\), \(\frac{d\mu_j}{dy_j}=\gamma_j\sin(2y_j)\).
    For the case \(k=j\),
    \[\frac{\partial \Xi_j}{\partial y_j}=\Bigg(
    \frac{Q_3}{6}+\frac{Q_1^3}{48}+\frac{Q_1Q_2}{8}+\frac{\mu_jQ_2}{4}+\frac{\mu_j Q_1^2}{8}+\frac{\mu_j^2Q_1}{2}+\mu_j^3
    \Bigg)\partial_{y_j}W_j,\]
    since \[\partial_{y_j}\Bigg(
    \frac{Q_3}{6}+\frac{Q_1^3}{48}+\frac{Q_1Q_2}{8}+\frac{\mu_jQ_2}{4}+\frac{\mu_j Q_1^2}{8}+\frac{\mu_j^2Q_1}{2}+\mu_j^3
    \Bigg)=0.\]
    For the case \(k\neq j\), one computes that
    \begin{align}\frac{\partial \Xi_j}{\partial y_k}=&\left(-\mu_k^2-\mu_j^2-\frac{Q_1^2}{8}-\frac{Q_2}{4}-\frac{1}{2}Q_1(\mu_j+\mu_k)\right)W_j\gamma_k\sin(2y_k)\\
    &+\Bigg(
    \frac{Q_3}{6}+\frac{Q_1^3}{48}+\frac{Q_1Q_2}{8}+\frac{\mu_jQ_2}{4}+\frac{\mu_j Q_1^2}{8}+\frac{\mu_j^2Q_1}{2}+\mu_j^3
    \Bigg)\partial_{y_k}W_j.\end{align}
    Clearly, \(|W_j|\leq C_j^2.\)
    It follows that 
    \begin{equation}\label{eqn:jComponentEst}
    \begin{aligned}
    \left|\frac{\partial \Xi_j}{\partial y_j}\right|\leq &\Bigg(\frac{\Vert Q_3\Vert_\infty}{6}+\frac{\Vert Q_1\Vert_\infty}{48}(\Vert Q_1\Vert_\infty^2+6\Vert Q_2\Vert_\infty+6|\mu_j|\Vert Q_1\Vert_\infty+24|\mu_j|^2)\\&+|\mu_j|(\mu_j^2+\frac{\Vert Q_2\Vert_\infty}{4})\Bigg)C_j^2\gamma_j\sum_\ell\frac{\gamma_\ell}{\eta_{j\ell}^2}
    \end{aligned}
    \end{equation}
    and 
    \begin{equation}\label{eqn:kComponentEst}
    \begin{aligned}
    \left|\frac{\partial \Xi_j}{\partial y_k}\right|&\leq\left(\mu_k^2+\mu_j^2+\frac{1}{8}\Vert Q_1\Vert_\infty(\Vert Q_1\Vert_\infty+4|\mu_j|+4|\mu_k|)\right)\gamma_k C_j^2\\
    &+\Bigg(\frac{\Vert Q_3\Vert_\infty}{6}+\frac{\Vert Q_1\Vert_\infty}{48}(\Vert Q_1\Vert_\infty^2+6\Vert Q_2\Vert_\infty+6|\mu_j|\Vert Q_1\Vert_\infty+24|\mu_j|^2)+|\mu_j|(\mu_j^2+\frac{\Vert Q_2\Vert_\infty}{4})\Bigg)\\
    &\times \frac{C_j^2\gamma_k}{\eta_{jk}}.
    \end{aligned}
    \end{equation}
    Let \(y,\tilde{y}\) be given vectors, 
    \begin{align}
        \Vert\Xi(y)-\Xi(\tilde{y})\Vert&=\sup_j\gamma_j^{1/2}\Vert\Xi_j(y)-\Xi_j(\tilde{y})\Vert\leq \sup_j\gamma_j^{1/2}\sum_k\left\Vert\frac{\partial \Xi_j}{\partial y_k}\right\Vert_\infty\Vert y_k-\tilde{y}_k\Vert\\
        &\leq \Vert y-\tilde{y}\Vert\sup_{j}\sum_k\gamma_j^{1/2}\gamma_k^{-1/2}\left\Vert\frac{\partial\Xi_j}{\partial y_k}\right\Vert_\infty.\label{eqn:normLip}
    \end{align}
    We will estimate the right hand side of \eqref{eqn:normLip} for \(k=j\) and the sum of all \(k\neq j.\) For \(k=j,\) by \eqref{eqn:jComponentEst}, we only need to show that \(\sup_jC_j^2|\mu_j|^3\sum_\ell\frac{\gamma_j\gamma_\ell}{\eta_{j\ell}^2}<\infty.\) This is guaranteed by the Craig-type condition \eqref{eqn:craigCond-1}. For the sum
    \(\sum_{k\neq j}\gamma_j^{1/2}\gamma_k^{-1/2}\left\Vert\frac{\partial\Xi_j}{\partial y_k}\right\Vert_\infty\), by \eqref{eqn:kComponentEst},
    we need to control the following sums 
    \[\sup_jC_j^2(1+\eta_{0j}^2)\sum_{k\neq j}\gamma_{j}^{1/2}\gamma_k^{1/2}(1+\eta_{0k}^2),\,\sup_j C_j^2(1+\eta_{0j}^3)\sum_{k\neq j}\frac{\gamma_j^{1/2}\gamma_k^{1/2}}{\eta_{jk}}.\] 
    The second sum is clearly finite by condition \eqref{eqn:craigCond-1}. The first sum is bounded by condition \eqref{eqn:craigCond-1} and \eqref{eqn:craigCond-3}. In fact, we can see from condition \eqref{eqn:craigCond-3} that \(\gamma_j\) is a decaying sequence since \(C_j>1.\)
    Moreover, \(\gamma_j\) decays faster than \(\eta_{0j}^4\) grows. Therefore, since \(|\mu_k|\leq a_0+\gamma_0+\eta_{0k}+\gamma_k\) and \(\gamma_k\) decays, we only need to control the first sum mentioned above. Due to condition \eqref{eqn:craigCond-1}, we only need to control 
    \(\sup_jC_j^2\sum_{k\neq j}\gamma_j^{1/2}\gamma_k^{1/2}\eta_{0k}^2.\) This is finite by \eqref{eqn:craigCond-3}.
\end{proof}

\section{The Generalized Abel Map}\label{sectionAbel}

In this section, we will consider the character group of the fundamental domain and the linear behavior of the character along a solution of mKdV. The central object, often called the generalized Abel map, was introduced in the Jacobi setting in \cite{SY97} and in continuum one-dimensional settings in \cite{EVY2019, BLY2}.

Let \(\Omega=\C\setminus\mathsf{E}\). Let \(\pi_1(\Omega)\) be the fundamental group and \(\pi_1(\Omega)^*\) be the associated character group. That is, for any \(\sigma_1,\sigma_2\in\pi_1(\Omega),\alpha\in\pi_1(\Omega)^*,\) we have
\[\alpha:\pi_1(\Omega)\to \T, \,\alpha(\sigma_1\sigma_2)=\alpha(\sigma_1)\alpha(\sigma_2).\]
The topology of \(\pi_1(\Omega)^*\) is induced by the norm \(\Vert\alpha\Vert=\sum_{j\in J}\frac{1}{2^j}|\alpha_j|.\)

Let $\omega(\cdot, F)$ be the harmonic measure of the domain \(\Omega\) for each subset \(F\subset\mathsf{E}=\partial\Omega\). Fix a reference point \(\xi_*\) in \((a_0,b_0)\), let \(\sigma_k\)
be a loop issuing from \(\xi_*\) and enters the lower half plane through \((a_k,b_k)\) and then returns to \(\xi_*\). Let \(\mathsf{E}_k\) be the subset of \(\mathsf{E}\) enclosed by the loop \(\sigma_k\). Denote \(\omega_k=\omega(\cdot,\mathsf{E}_k)\) and \(\omega_\infty=\omega (\cdot,\mathsf{E}_\infty)\), where \(\mathsf{E}_\infty\) is the unbounded subset of \(\mathsf{E}\) that lies to the right of \(\xi_*\). Then the generalized Abel map in the literature of AKNS hierarchy is defined as 
\begin{align}
    &\mathcal{A}:\mathcal{D}(\mathsf{E})\to\pi_1(\Omega)^*\times\T,\\
    &D\mapsto\left(\cdots,\sum_{j}\frac{\epsilon_j}{2}\int_{a_j}^{\mu_j}\omega_k\mod\Z,\cdots;\sum_{j\in J}-\frac{\epsilon_j}{2}\int_{a_j}^{\mu_j}\omega_\infty\mod\Z\right)
\end{align}
It is useful to distinguish the two components of the map $\mathcal{A}=(\mathcal{A}_c,\mathcal{A}_r)$, where the first \emph{character component} $\mathcal{A}_c:\mathcal{D}(\mathsf{E})\to\pi_1(\Omega)^*$ and the second \emph{rotation component} $\mathcal{A}_r:\mathcal{D}(\mathsf{E})\to\T$ are defined as
\[\mathcal{A}_c(D)=\left(\cdots,\sum_{j}\frac{\epsilon_j}{2}\int_{a_j}^{\mu_j}\omega_k\mod\Z,\cdots\right)\in\pi_1(\Omega)^*,\]
\[\mathcal{A}_r(D)=\left(\sum_{j\in J}-\frac{\epsilon_j}{2}\int_{a_j}^{\mu_j}\omega_\infty\mod\Z\right)\in\T.\]
Note that the map $\mathcal{A}_c$ is well-defined in a Widom domain \cite{SY97}, but the map $\mathcal{A}_r$ is well defined in a Widom domain satisfying a finite gap length condition \cite{FLLZ}.
\begin{lemma}[\cite{BLY2}]\label{lem:abelMapIsHomeo}
The generalized Abel map \(\mathcal{A}\) is a homeomorphism if \(\mathsf{E}\) is homogeneous, that is,
\[\inf_{x\in\mathsf{E}}\inf_{h\in (0,\mathrm{diam}(\mathsf{E}))}\frac{|\mathsf{E}\cap(x-h,x+h)|}{2h}>0.\]
\end{lemma}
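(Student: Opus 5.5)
The plan is to follow the Sodin--Yuditskii strategy as adapted to continuum Dirac operators in \cite{BLY2}, and to reduce the homeomorphism claim to three facts: continuity, injectivity and surjectivity of $\mathcal{A}$ on the compact space $\mathcal{D}(\sE)$. Since $\mathcal{D}(\sE)$ is a countable product of circles, it is compact in the product topology. The target $\pi_1(\Omega)^*\times\T$ is Hausdorff. Hence a continuous bijection is automatically a homeomorphism, and only these three facts need to be established.

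\textbf{Preliminaries from homogeneity.} Homogeneity of $\sE$ implies that $\Omega$ is a Widom domain with the Direct Cauchy Theorem (DCT) property. It also implies that the harmonic measures $\omega_k$, $\omega_\infty$ have densities $d\omega/d\xi$ which are locally integrable on $\sE$ and bounded near gap edges by $C(\xi-a_j)^{-1/2}(b_j-\xi)^{-1/2}$ times uniform constants. This bound gives continuity. Each term $\frac{\epsilon_j}{2}\int_{a_j}^{\mu_j}\omega_k$ depends continuously on $(\mu_j,\epsilon_j)$, and it vanishes at both identified endpoints modulo $\Z$, so it is well defined on the $j$-th circle. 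The Widom condition $\sum_j g(c_j)<\infty$, where $c_j$ are the critical points of the Green function, supplies a summable majorant. So each coordinate is a uniformly convergent series of continuous functions, and the norm $\sum 2^{-j}|\alpha_j|$ then gives continuity of $\mathcal{A}_c$. For $\mathcal{A}_r$ I would use the finite gap length property, which follows from homogeneity plus the normalization, exactly as in \cite{FLLZ}.

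\textbf{Bijectivity via character-automorphic functions.} I would pass to the universal cover $\mathbb{D}\to\Omega$ with Fuchsian group $\Gamma\cong\pi_1(\Omega)$. Each divisor $D$ is associated, through the Schur functions $s_\pm$ of the unique reflectionless operator with those Dirichlet data, to a character-automorphic function of Hardy class $H^2(\alpha)$. Its character is read off from the Blaschke product over the lifted divisor. The standard computation, pairing the argument principle with the Green function, shows that this character equals $\mathcal{A}_c(D)$ plus the character of a fixed normalized function. The extra $\T$ component records the phase at $\infty$, namely the rotation of $\varphi$ under the $U(1)$ gauge symmetry.

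Injectivity then follows from DCT. For each character $\alpha$, the space $H^2(\alpha)$ contains, up to normalization, a unique reproducing kernel at the reference point, and its zero divisor realizes $D$. Two divisors with the same image would produce two distinct normalized extremal functions, contradicting uniqueness. Surjectivity comes from continuity of the reproducing kernels $k^\alpha$ in $\alpha$ (again a DCT consequence) together with a degree argument. Alternatively, one can say directly that every $\alpha$ has such a kernel, and its zero set defines a divisor in $\mathcal{D}(\sE)$ mapping to $\alpha$.

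\textbf{Main obstacle.} I expect the hardest step to be the unboundedness of $\sE$ and the extra rotation coordinate. The Sodin--Yuditskii theory is for compact $\sE$, so $\infty$ becomes a boundary point of $\Omega$. The Dirac normalization $s_\pm(iy)\to0$ must then be matched with the behaviour of $\omega_\infty$ near $\infty$. First I would try to handle this as in \cite{BLY2}: conformally map $\Omega$ by a Möbius transformation sending a gap point to $\infty$, which makes the set compact and still homogeneous. The $\T$ factor then arises as the character of the loop around the image of $\infty$, once the $\infty$-phase normalization is fixed.
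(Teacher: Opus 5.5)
The paper gives no proof of this lemma; it simply imports it from \cite{BLY2}. So the proposal has to stand on its own. Your reduction to a continuous bijection from a compact space onto a Hausdorff space is fine. The bijectivity argument, however, has a genuine gap exactly where you place the ``main obstacle'', and the fix you propose cannot work.

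\textbf{The $\T$ factor.} The fundamental group is a topological invariant, so after a M\"obius map $T$ with $T(\xi_*)=\infty$ you still have $\pi_1(\hat{\C}\setminus T(\mathsf E\cup\{\infty\}))\cong\pi_1(\Omega)$. Every character of the new domain is therefore already a coordinate of $\pi_1(\Omega)^*$, i.e.\ already recorded by $\mathcal{A}_c$. Concretely, $T(\infty)$ is an accumulation point of $T(\mathsf E)$, since $\mathsf E$ is unbounded. So no loop encircles it alone, and there is no extra character to produce the $\T$ factor.

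The one-gap case makes this explicit. For $\mathsf E=\R\setminus(a_0,b_0)$, the domain $\Omega$ is simply connected and $\pi_1(\Omega)^*$ is trivial. On the other hand $\cD(\mathsf E)\cong\T$, so $\mathcal{A}=\mathcal{A}_r$ carries all the information. More generally, with $g$ gaps one has $\cD(\mathsf E)\cong\T^g$ but $\pi_1(\Omega)^*\cong\T^{g-1}$. The Dirac divisor also has a point in the reference gap $G_0\ni\xi_*$, one more than the Sodin--Yuditskii divisor of the compact model. By invariance of domain, $\mathcal{A}_c$ is then never injective. Your uniqueness-of-reproducing-kernel argument only sees the character, so it cannot separate divisors in a fibre of $\mathcal{A}_c$.

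What is missing is the ingredient that controls the rotation coordinate. You need the behaviour at the boundary point $\infty$, namely $\omega_\infty$ and the Martin function at $\infty$, and a proof of two things:
\begin{enumerate}
\item $\mathcal{A}_r$ separates points in each fibre of $\mathcal{A}_c$;
\item the joint map $(\mathcal{A}_c,\mathcal{A}_r)$ is onto.
\end{enumerate}
This is precisely what the character-automorphic Fourier transform and reflectionless canonical system machinery of \cite{BLY2} supplies. Your heuristic that the $\T$ records the gauge rotation is correct, but it is never implemented.

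\textbf{Two further problems.} First, homogeneity does not imply finite gap length. For example, $\mathsf E=\bigcup_{n\in\Z}[2n,2n+1]$ is homogeneous but has infinitely many gaps of length $1$. As the paper notes just before the lemma, $\mathcal{A}_r$ is only known to be well defined under a finite gap length condition; the Craig-type conditions provide this where the lemma is used. So finite gap length must be carried as a tacit hypothesis, not derived from homogeneity ``plus the normalization''.

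Second, you attach character-automorphic functions to a divisor through ``the unique reflectionless operator with those Dirichlet data''. For an arbitrary $D\in\cD(\mathsf E)$, the existence of such an operator is surjectivity of $\mathcal{B}$. In this theory that is a consequence of the Abel-map and Hardy-space analysis, so using it here is circular. The functions must instead be built directly from $D$, for instance as Green-function Blaschke products with the $\epsilon_j$ recording the sheet.
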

Notice that our Craig-type conditions \eqref{eqn:craigCond-1} and \eqref{eqn:craigCond-3} implies that 
\[\sup_{j\in J}\left(\sum_{\ell\in J,\ell\neq j}\frac{\gamma_j^{1/2}\gamma_\ell^{1/2}}{\eta_{j\ell}}\right)+\sup_j\gamma_j<\infty.\]
According to \cite[Lemma A.1]{BDGL2018DMJ}, for the unbounded set $\mathsf{E}$, the conditions \eqref{eqn:craigCond-1} and \eqref{eqn:craigCond-3} implies that $\mathsf{E}$ is homogeneous. We will need this observation in Section \ref{sec.proofMain}. 

We will also need the remarkable linearization property of the generalized Abel map.
\begin{theorem}\label{thm:linearAbel} Suppose the Craig-type conditions \eqref{eqn:craigCond-1} and \eqref{eqn:craigCond-3} hold. Then
the Abel map $\mathcal{A}$ maps the  mKdV and translation flow $u(x_0,t_0)\mapsto u(x_0+x,t_0+t)$ into a linear flow.
\end{theorem}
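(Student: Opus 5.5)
Along the flows $\partial_x y=\Psi(y)$ and $\partial_t y=\Xi(y)$ of Proposition~\ref{prop:LipVecField}, we will show that each component of $\mathcal{A}(y(x,t))$ has constant $x$- and $t$-derivatives. Then $\mathcal{A}(y(x,t))=\mathcal{A}(y(0,0))+x\eta+t\eta^{(1)}$ (together with the analogous rotation-component frequencies $\vartheta_0,\vartheta_1$).

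The approach is the standard one from \cite{BDGL2018DMJ,FLLZ}: differentiate the Abel sums term by term and use the Dubrovin equations. By Lemma~\ref{lem:dubrovin-2} and \eqref{eqn:dubrovin-1}, away from the discrete set of times/positions where some $\mu_j$ hits a gap edge, the $j$-th summand of $\mathcal{A}_c$ has $t$-derivative $\frac{\epsilon_j}{2}\omega_k'(\mu_j)\partial_t\mu_j$, where $\omega_k'$ denotes the harmonic-measure density written via the complex Green/harmonic differential. In the gap $(a_j,b_j)$ this density has the form
\[
\frac{P_k(\mu)}{\sqrt{\prod_\ell (\mu-a_\ell)(\mu-b_\ell)}}
\]
after regularization. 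Substituting $\partial_t\mu_j=2\epsilon_j\sqrt{(b_j-\mu_j)(\mu_j-a_j)}\,\partial_t y_j$ and the formula for $\partial_t y_j$, the square roots cancel against $W_j$. We are then left with
\[
\sum_j \frac{\Phi_k(\mu_j)\bigl(\mu_j^3+\frac{Q_1}{2}\mu_j^2+\dots\bigr)}{\prod_{\ell\ne j}(\mu_j-\mu_\ell)},
\]
where $\Phi_k$ is meromorphic on $\Omega$. This is a Lagrange-interpolation sum. We will evaluate it by the residue theorem: integrate
\[
\frac{\Phi_k(z)\,(z^3+\tfrac{Q_1}{2}z^2+\dots)}{R(z)}\,dz
\]
(or its $\Omega$-analogue, using the product formula for $R$) over large contours. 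The sum over the poles at the $\mu_j$ equals a residue at infinity. That residue depends only on $\sE$ and on the asymptotic coefficients of $R$, and the $Q_1,Q_2,Q_3$ terms in $\Xi_j$ are exactly those which cancel the $\mu$-dependent parts of the expansion at infinity, as in Lemma~\ref{lem:scalarField-3}. The same computation applies with $\omega_\infty$ for the rotation component $\mathcal{A}_r$ and with $\Psi$ for the $x$-flow, where the lower order factor $(-\Re u+\mu_j)=\mu_j+Q_1/2$ gives the first-order analogue.

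To make this rigorous in infinite gap length, we will first do the computation for finite-gap truncations $\sE_N$ (finitely many gaps), where it is the classical algebro-geometric identity. We will then pass to the limit. The Craig conditions \eqref{eqn:craigCond-1}, \eqref{eqn:craigCond-3} give uniform Lipschitz bounds on the vector fields (Proposition~\ref{prop:LipVecField}), so the truncated flows converge to the true flows locally uniformly. Continuity of $\mathcal{A}$ (Lemma~\ref{lem:abelMapIsHomeo}, via homogeneity) then transfers linearity to the limit. The edge crossings $\mu_j\in\{a_j,b_j\}$ are handled by continuity, since they occur on a discrete set.

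The main obstacle is the limiting step. We must show that the frequencies $\eta^{(1)}_k$ (and the rotation frequency) of the truncations converge. These frequencies involve moments up to third order of the harmonic measure, roughly $\int \lambda^3\,d\omega_k$-type quantities. Convergence should follow from the summability $\sum_\ell C_\ell^2\gamma_\ell^{1/2}(1+\eta_{0\ell}^2)<\infty$ together with the $\eta_{j0}^3$ weight in \eqref{eqn:craigCond-1}. We will also need the term-by-term differentiation of the infinite Abel sums to be justified uniformly, which we would obtain by dominated convergence using $|W_j|\le C_j^2$.
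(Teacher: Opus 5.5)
Your overall route is the same as the paper's: prove linearity for finite-gap truncations $\mathsf{E}_N$ and then let $N\to\infty$. At level $N$ this is the classical algebro-geometric identity, which the paper cites from Gesztesy--Holden rather than redoing the residue computation. Your use of $N$-uniform Lipschitz bounds and Gronwall to get $y^{(N)}\to y$ spells out a step the paper leaves implicit. For that step you also need the truncated fields $\Psi^{(N)},\Xi^{(N)}$ to converge to $\Psi,\Xi$ in the weighted norm; Lipschitz bounds alone are not enough.

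The gap is in how you transfer linearity to the limit. At level $N$ the identity reads $\mathcal{A}^{(N)}(y^{(N)}(x,t))=\mathcal{A}^{(N)}(y^{(N)}(0,0))+x\eta_N+t\eta^{(1)}_N$. Here $\mathcal{A}^{(N)}$ is built from the harmonic measures $\omega_k^{(N)},\omega_\infty^{(N)}$ of the \emph{different} domain $\Omega_N=\C\setminus\mathsf{E}_N$. Continuity of $\mathcal{A}$ on $\mathcal{D}(\mathsf{E})$ only controls $\mathcal{A}(y^{(N)})-\mathcal{A}(y)$ and says nothing about $\mathcal{A}^{(N)}-\mathcal{A}$. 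What you need is that $\mathcal{A}^{(N)}\to\mathcal{A}$ uniformly on divisors, componentwise. That requires two things:
\begin{itemize}
\item convergence of the harmonic measures of $\Omega_N$ to those of $\Omega$ on every gap;
\item a summable majorant, uniform in $N$.
\end{itemize}
The Craig-type summability you invoke supplies only the majorant and tail control. The convergence itself is a potential-theoretic input that your plan never provides. The paper gets it from Landkof's stability theorem for the Dirichlet problem (Theorem~5.15), applied to the decreasing family of regular domains $\Omega_N$. This gives locally uniform convergence of the Green functions $G_N(z,\lambda)\to G(z,\lambda)$ and, after normalization, of the Martin kernels, for both $\mathcal{A}_c$ and $\mathcal{A}_r$. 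The Craig conditions then make the Abel series absolutely uniformly convergent and make $\int_{\R\setminus\mathsf{E}}G(\xi,\lambda)\,d\xi^{3}$ and $\int_{\R\setminus\mathsf{E}}M(\xi)\,d\xi^{3}$ finite, so dominated convergence applies.

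Once this is in place, what you call the main obstacle is not an independent difficulty. If $\mathcal{A}^{(N)}\circ y^{(N)}\to\mathcal{A}\circ y$ locally uniformly in $(x,t)$, each component is a locally uniform limit of linear flows on a circle. This forces the slopes $\eta_N,\eta^{(1)}_N$ (and the rotation slopes) to converge, and the limit is linear with the limiting slopes; this is exactly how the paper concludes for $\mathcal{A}_r$. So you do not need a separate moment estimate for the frequencies, but you do need $G_N\to G$. Without it, neither the Abel maps nor the frequencies are known to converge. The term-by-term differentiation of the infinite Abel sums that you mention at the end is also unnecessary once you argue through truncations.
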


\begin{proof}
    We will first work on the character coordinate $\mathcal{A}_c$ in $\pi_1(\Omega)^*$ and then the rotation coordinate $\mathcal{A}_r$ in $\T.$ In particular, their linearization requires different conditions.
    
    For $k=0,1,2,\cdots,$ let $\Theta_k(\lambda)$ be the Abelian integrals of the second kind of order $k$ such that 
    \[\mathrm{Im}\Theta_k(\lambda)=\mathrm{Im}\lambda^{k+1}+\int_{\R\setminus\mathsf{E}}G(\xi,\lambda)d\xi^{k+1}.\]
    Then we need to show that the character component of the Abel map $\mathcal{A}_c:\mathcal{D}(\mathsf{E})\to\pi_1(\Omega)^*$ conjugates the mKdV and translation flow into a linear flow. Moreover, the $B$-periods of $\Theta_k,k=0,2$ give the full set of frequencies of this flow. The case $k=0,1$ was proved by \cite[Theorem 6.2]{FLLZ}. The rotation component for $k=0,1$ $\mathcal{A}_r$ was proved by \cite[Theorem 6.4]{FLLZ}. We only need to prove that the images  of $\mathcal{A}_c,\mathcal{A}_r$ are linear  for the case $k=2.$

    \emph{The character component:} If $\R\setminus\mathsf{E}$ consists of finitely many gaps, the linearization property was proved for the entire hierarchy; compare \cite[(3.215), (3.216)]{GH03} (including $k=0,1,2,\cdots$). For the case of infinitely many gaps, notice that the Craig type conditions \eqref{eqn:craigCond-1},\eqref{eqn:craigCond-3} and the Widom condition already implies that $\mathrm{Im}\Theta_k(\lambda)$ is convergent for any fixed $\lambda$ and for $k=0,1,2$. Indeed, the integral \[\left|\int_{\R\setminus\mathsf{E}}G(\xi,\lambda)d\xi^{k+1}\right|\leq\sum_{j}G(c_j,\lambda)|b_j^{k+1}-a_j^{k+1}|,\]
    where $G(c_j,\lambda)$ is the critical value in $(a_j,b_j)$. Conditions \eqref{eqn:craigCond-1}, \eqref{eqn:craigCond-3} imply that $\sum_j\gamma_j^{1/2}<\infty,\, \sum_{j}\gamma_j\eta_{0j}^4<\infty$. The Widom condition implies that $\sum_jG(c_j,\lambda)<\infty$ for any fixed $\lambda\notin\mathsf{E}$.

    Let $\mathsf{E}_N=\R\setminus\left(\cup_{j\in J_N}(a_j,b_j)\right)$ be a sequence of approximants to $\mathsf{E}$, where $\cdots\subset J_N\subset J_{N+1}\subset\cdots\subset J$ is an exhaustion of the index set $J$ for which we have $\mathsf{E}=\R\setminus\left(\cup_{j\in J}(a_j,b_j)\right)$. Since $\{\mathsf{E}_N\}$ is increasing, $\{\Omega_N=\C\setminus\mathsf{E}_N\}$ is a decreasing sequence of Denjoy domains that are Dirichlet regular, homogeneous and have no internal boundary point. By \cite[Theorem 5.15]{Landof1972}, we have the stability of Dirichlet problem. Since the uniform convergence on compacts of the associated Green's functions $G_N(z,\lambda)$ of $\Omega_N$ to that of $G(z,\lambda)$ of $\Omega=\C\setminus\mathsf{E},\lambda\in\Omega$ implies the convergence of $B$-periods of $\Theta_k^{(N)}$ to those of $\Theta_k$, locally uniformly, we have the convergence of $\mathcal{A}_c^{(N)}$ to $\mathcal{A}_c$. Since $\mathcal{A}_c$ is a majorized by an absolutely uniformly convergent series, from the fact that $\mathcal{A}^{(N)}_c$ linearize the translation and mKdV flow \cite[Theorem 3.33]{GH03}, we conclude that $\mathcal{A}_c$ also linearizes the translation and mKdV flow. 
    
    \emph{The rotation component:} Let $M(\lambda)$ be the symmetric Martin function at the infinity for the domain $\bbC \setminus \sE$. Let us first point out that the Craig type conditions \eqref{eqn:craigCond-1}, \eqref{eqn:craigCond-3}  also imply 
    \[\int_{\R\setminus\mathsf{E}}M(\xi)d\xi^{k+1}<\infty, k=0,2.\]
    Indeed, since $M(\xi)$ grows at most linearly for $\xi$ in each gap of $\mathsf{E}$ \cite[Lemma 3.2]{EGL}, it suffices to show
    \[\sum_{j\in J}(1+\eta_{0j})\gamma_j<\infty,\,\sum_{j\in J}(1+\eta_{0j})|b_j^3-a_j^3|<\infty.\]
    The first inequality clearly follows from \eqref{eqn:craigCond-3}. For the second inequality, we have
    \[(1+\eta_{0j})|b_j^3-a_j^3|\leq 3\gamma_j(1+\eta_{0j})(\eta_{0j}+\gamma_j)^2.\]
    Observe that \eqref{eqn:craigCond-1} and \eqref{eqn:craigCond-3} implies that $\sum_{j\in J}\gamma_j\eta_{0j}^4<\infty$. In order to show the linearization of the rotation component $\mathcal{A}_r$, we follow the lines of \cite{FLLZ}. Note that for the finite approximants $\mathsf{E}_N$, the linearity follows from the exact formula \cite[(3.217), (3.218)]{GH03} in the exponential factors of the expressions of the solution. The identification of the rotation component $\mathcal{A}_r$ with the exponential  factors of the expression of the solution was proved in \cite[Theorem 6.4]{FLLZ}. Indeed, they all arise from the Abelian integrals of the second kind along an open loop. According to \cite[Theorem 4.21]{FLLZ}, the series
    \[\mathcal{A}_r(D)=\sum_{j\in J}-\frac{\epsilon_j}{2}\int_{a_j}^{\mu_j}\omega_\infty\mod\Z\]
    is absolutely uniformly convergent. Let $\mathcal{A}_r^{(N)}$ be the rotation component of $\mathsf{E}_N$, then by dominated convergence theorem, we have point wise $\mathcal{A}_r^{(N)}\to\mathcal{A}_r$. In order to prove linearity of $\mathcal{A}_r$, it suffices to show that the Martin kernels $M^{(N)}(z,\lambda)$ of $\Omega_N$ converges uniformly on compacts away from $\lambda$ of $\Omega$ to $M(z,\lambda)$. Since $M(z,\lambda)=\frac{G(z,\lambda)}{G(i,\lambda)}$, where $i\in\Omega$ is a normalization point, we can apply \cite[Theorem 5.15]{Landof1972} again. Since the image of $\mathcal{A}_r$ is a limit of a sequence of linear flows in the topology of uniform convergence on compacts, it is also a linear flow with the limiting frequencies.
\end{proof}

\section{Proof of Main Theorems}\label{sec.proofMain}
In this section, we prove the main results. We  first establish the existence and uniqueness and then prove the almost periodicity.
\begin{proof}[Proof of Theorem \ref{thm:main1}]
    Let \(\varphi\) be an uniformly almost periodic function such that the Dirac operator \(\Lambda_\varphi \) has purely absolutely continuous spectrum that coincides with \(\mathsf{E}\) which satisfies the Craig-type conditions \eqref{eqn:craigCond-1} and \eqref{eqn:craigCond-3}. By Lemma \ref{lem:reflectionlessCond}, \(\varphi\in\mathcal{R}(\mathsf{E})\). Let \(f=\mathcal{B}(\varphi)\in\mathcal{D}(\mathsf{E})\). Consider the following initial value problem 
    \begin{align}\label{eqn:ivp}\left\{\begin{aligned}&\partial_xy=\Psi(y),\,\partial_t=\Xi(y)\\
    &y(0,0)=f.
    \end{aligned}\right.\end{align}
    By Proposition \ref{prop:LipVecField}, there exists a unique global solution \(y(x,t)\) of \eqref{eqn:ivp}.
    Let $D=\sum_j\epsilon_j\mu_j$ be the associated divisor of the phase variable \(y\). Then the trace formula \(\Re[u]=-\frac{1}{2}Q_1(y)\) gives the real part of the solution \(u(x,t)\). Since \(\Lambda_{iu}\) is isospectral to \(\Lambda_u\) and $iu$ is also a solution of mKdV, running above argument again then gives the real part of the potential \(iu\), that is, \(\Im[u]=-\Re[iu] = \frac{1}{2}Q_1(\tilde{y})\), where \(\tilde{y}\) is the unique global solution of \eqref{eqn:ivp} with the initial condition replaced by \(y(0,0)=\tilde{f}=\mathcal{B}(i\varphi)\). The fact that the $u$ constructed in the above solves the Cauchy problem \eqref{eqn:mKdV}, \eqref{eqn:cauchyI} follows verbatim from \cite[Proposition 8.1]{FLLZ}.

    In order to prove the almost periodicity, we need to use the linearization property of the generalized Abel map by Theorem \ref{thm:linearAbel}. Moreover, according to \cite[Corollary 6.3, Theorem 6.4]{FLLZ}, there are frequencies \(\eta,\eta^{(1)}\in\R^\infty,\vartheta_0,\vartheta_1\in\R\) and vectors \(\zeta\in \T^\infty,\zeta'\in\T\) such that 
    \[(\zeta,\zeta')=\mathcal{A}\circ\mathcal{B}(q)\in\pi_1(\Omega)^*\times\T\simeq\T^\infty\times\T.\]
    Moreover, the translation and mKdV flow \(u(x,t)\mapsto u(x-\ell,t-\tau)\) is mapped to the following linear flow
    \[\mathcal{A}\circ\mathcal{B}(u(x-\ell,t-\tau))=\left(\zeta+\ell\eta+\tau\eta^{(1)},\zeta'+\ell\vartheta_0+\tau\vartheta_1\right).\]
    Since both maps \(\mathcal{A}\) and \(\mathcal{B}\) are homeomorphism, we have that \[u=\mathcal{M}\left(\zeta+\ell\eta+\tau\eta^{(1)},\zeta'+\ell\vartheta_0+\tau\vartheta_1\right)\]
    where \(\mathcal{M}=\mathcal{B}^{-1}\circ\mathcal{A}^{-1}\). This proves almost periodicity.
\end{proof}

 In order to prove Corollary \ref{cor:main2}, we need a direct spectral theory result in \cite{FLLZ}.
 \begin{lemma}[\cite{FLLZ}]\label{lem:ac}
     Under the conditions of Corollary~\ref{cor:main2}, the spectral type of $\Lambda_\varphi$ is purely absolutely continuous. For all $k\in\Z^d, r\in (0,h)$,
\[
\gamma_{k}\leq \epsilon_0^{\frac{1}{2}}e^{-2 \pi r |k|},
\] 
and for all $k' \neq k$,
\[\mathrm{dist}(G_k,G_{k'})\geq\frac{4^\tau\kappa^2}{c^2|k'-k|^{2\tau}}.
\]
Moreover, the spectrum is homogeneous in the sense of Carleson, and satisfies the Craig-type conditions \eqref{eqn:craigCond-1}  and \eqref{eqn:craigCond-3}.
 \end{lemma}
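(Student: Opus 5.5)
The lemma is cited from \cite{FLLZ}, so the plan is to recall how that argument goes and check that it transfers to the present conditions. The only new point is the stronger Craig-type conditions \eqref{eqn:craigCond-1}, \eqref{eqn:craigCond-3}, which carry the extra weights $1+\eta_{j0}^3$ and $1+\eta_{0\ell}^2$.

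\emph{Step 1: gap structure.} Since the Dirac operator here is exactly the Lax operator of defocusing NLS, the spectral input is identical to \cite{FLLZ}. I would run the multiscale/KAM reducibility scheme for the quasiperiodic Dirac cocycle $\begin{bmatrix}-iz & i\varphi\\ -i\overline{\varphi} & iz\end{bmatrix}$ with small analytic potential. This yields three things:
\begin{enumerate}[(a)]
\item Full reducibility at every energy in the spectrum, which gives purely a.c.\ spectrum via Eliasson-type arguments.
\item A labelling of gaps $G_k$, $k\in\Z^d$, by the resonance $2z\approx\pi\langle k,\omega\rangle$.
\item Gap lengths bounded by the $k$-th Fourier coefficient of $Q$. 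Analyticity in the strip $|y|<h$ together with $\sup|Q|<\epsilon_0$ then gives $\gamma_k\le\epsilon_0^{1/2}e^{-2\pi r|k|}$ for all $r<h$, the square root absorbing the loss from the KAM steps.
\end{enumerate}
The separation bound $\mathrm{dist}(G_k,G_{k'})\gtrsim\kappa^2|k-k'|^{-2\tau}$ comes from the Diophantine condition, since the gap centres sit near $\frac{\pi}{2}\langle k,\omega\rangle$ up to exponentially small errors.

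\emph{Step 2: homogeneity.} This follows directly from the two bounds in Step 1, since exponentially small gaps that are only polynomially separated cannot cluster enough to violate the homogeneity bound. Alternatively, one can deduce it from the Craig conditions via \cite[Lemma A.1]{BDGL2018DMJ}.

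\emph{Step 3: Craig-type conditions.} First I bound $C_j$ uniformly. Write
\[
\log C_j^2 \le \sum_{k\ne j}\log\Bigl(1+\frac{\gamma_k}{\mathrm{dist}(\lambda,G_k)}\Bigr)\le\sum_{k\neq j}\frac{\gamma_k}{\eta_{jk}} .
\]
Using the two estimates, this is at most $\sum_{k'}\epsilon_0^{1/2}e^{-2\pi r|k'|}|k'-k_j|^{2\tau}\kappa^{-2}$. That sum is not uniform in $j$ as it stands. To fix this, I would split $e^{-2\pi r|k'|}\le e^{-\pi r|k'|}e^{-\pi r|k'|}$ and use $|k'-k_j|^{2\tau}\le(1+|k'|)^{2\tau}(1+|k_j|)^{2\tau}$. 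The factor $(1+|k_j|)^{2\tau}$ is then compensated by the $\gamma_j^{1/2}$ appearing in \eqref{eqn:craigCond-1}; for $C_j$ itself I would use the refinement that only gaps $k'$ close to $k_j$ contribute the bad factor. Next, $\eta_{j0}\lesssim|k_j|$, so $\eta_{j0}^3$ is polynomial in $|k_j|$. Finally, in $\sum_\ell\gamma_j^{1/2}\gamma_\ell^{1/2}/\eta_{j\ell}$, each term is at most
\[
\epsilon_0^{1/2}e^{-\pi r(|k_j|+|k_\ell|)}|k_j-k_\ell|^{2\tau}\kappa^{-2},
\]
so multiplying by polynomial weights still leaves a convergent, uniformly bounded expression. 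Condition \eqref{eqn:craigCond-3} follows in the same way, from $\sum_\ell e^{-\pi r|\ell|}(1+|\ell|)^2<\infty$.

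\emph{Main obstacle.} The delicate point is the uniform control of $C_j$, that is, the product over all other gaps evaluated inside $G_j$, because the near-resonant gaps $k'$ with $|k'-k_j|$ small contribute weights of size $|k'-k_j|^{-2\tau}$. I would handle these by the exponential smallness of $\gamma_{k'}$ itself, using $\gamma_{k'}\le\epsilon_0^{1/2}$, so that $\log C_j^2=O(\epsilon_0^{1/2})$ uniformly. This is exactly the estimate carried out in \cite{FLLZ}.
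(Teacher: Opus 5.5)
Your overall route matches the paper's. The paper imports the spectral input (a.c.\ spectrum, $\gamma_k\le\epsilon_0^{1/2}e^{-2\pi r|k|}$, the separation bound, homogeneity) from \cite{FLLZ}. It then notes that the only new ingredient needed for the extra weights in \eqref{eqn:craigCond-1}, \eqref{eqn:craigCond-3} is the linear bound $\eta_{0k}\lesssim|k|$, which the exponential decay of the gaps absorbs.

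The gap is in Step 3, exactly at the point you flag. Your resolution, $\log C_j^2=O(\epsilon_0^{1/2})$ uniformly in $j$, is false in general. Every factor in the definition of $C_j$ exceeds $1$, so $C_j^2\ge 1+\gamma_0/(\eta_{j0}+\gamma_j)$. Now $G_0$ has length $\approx 2|\hat Q(0)|$, and for $d\ge 2$ the values $\langle k,\omega\rangle$ accumulate at $0$. To leading order, as for the constant potential $\hat Q(0)$, the gap with label $k$ sits at distance $\approx\pi^2\langle k,\omega\rangle^2/(2|\hat Q(0)|)$ from $G_0$. So for generic $Q$ you get $C_j^2\gtrsim|\hat Q(0)|^2/\langle k_j,\omega\rangle^2$ along a subsequence, and $\sup_j C_j=\infty$. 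This quadratic behaviour is also why the separation bound carries $\kappa^2|k-k'|^{-2\tau}$. Near large gaps the gap centres are not within exponentially small errors of the resonant energies, contrary to your Step 1 heuristic.

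The bound $\gamma_{k'}\le\epsilon_0^{1/2}$ does not rescue this. The dangerous gaps are those with \emph{small} label $k'$ (so $\gamma_{k'}$ is not small) and \emph{large} $|k'-k_j|$ (so $\eta_{jk'}$ may be as small as $|k'-k_j|^{-2\tau}$). Your refinement that ``only $k'$ close to $k_j$ contribute the bad factor'' has this backwards. Your other fix fails as well. Using $\log(1+x)\le x$ gives only $\log C_j^2\lesssim\epsilon_0^{1/2}(1+|k_j|)^{2\tau}$, i.e.\ $C_j^2\le\exp\bigl(c(1+|k_j|)^{2\tau}\bigr)$. The polynomial now sits in the exponent, and since $2\tau>1$ it is not compensated by $\gamma_j^{1/2}\lesssim e^{-\pi r|k_j|}$.

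What is true, and sufficient, is a subexponential bound, obtained by keeping the logarithm. With $A=c^2\epsilon_0^{1/2}4^{-\tau}\kappa^{-2}$, you have $\log C_j^2\le\sum_{k\ne k_j}\log\bigl(1+Ae^{-2\pi r|k|}|k-k_j|^{2\tau}\bigr)$.
\begin{itemize}
\item For $|k|\ge\frac{2\tau}{\pi r}\log(2+|k_j|)$, one has $Ae^{-2\pi r|k|}|k-k_j|^{2\tau}\le A(1+|k|)^{2\tau}e^{-\pi r|k|}$, which is summable over $\Z^d$ uniformly in $j$.
\item The remaining $O\bigl((\log(2+|k_j|))^d\bigr)$ labels contribute $O(\log(2+|k_j|))$ each.
\end{itemize}
Hence $C_j^2\le\exp\bigl(C(\log(2+|k_j|))^{d+1}\bigr)=e^{o(|k_j|)}$. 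Combine this with $\eta_{j0}\lesssim|k_j|$, $\gamma_j^{1/2}\lesssim e^{-\pi r|k_j|}$ and $\sum_\ell\gamma_\ell^{1/2}/\eta_{j\ell}\lesssim(1+|k_j|)^{2\tau}$, and you obtain \eqref{eqn:craigCond-1} and \eqref{eqn:craigCond-3}. Without an estimate of this type (or the corresponding one in the proof of \cite[Theorem 7.13]{FLLZ}), your Step 3 does not close. In particular, your treatment of \eqref{eqn:craigCond-3} tacitly drops the factor $C_\ell^2$.
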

 Indeed, it is shown that there is a linear  (in \(k\)) upper bound for points in \(G_k\). This implies that \(\eta_{0k}<c|k|\). Interested reader can refer to the proof of \cite[Theorem 7.13]{FLLZ}.
 \begin{proof}[Proof of Corollary \ref{cor:main2}]
 Under the assumptions of the corollary, by Lemma \ref{lem:ac}, we have that the spectrum of \(\Lambda_\varphi\) is purely absolutely continuous and the spectrum \(\mathsf{E}\) is homogeneous satisfying the Craig type conditions. Therefore, the conditions of Theorem \ref{thm:main1} are satisfied. 
 \end{proof}

\begin{appendix}
\section{Non-Stationary AKNS Hierarchy} \label{sec:AKNS}

Let $U(z)=\begin{bmatrix}-iz& q \\ p & iz\end{bmatrix}$ and
$\widetilde{V}_{n+1}$ is defined as follows:
$$\widetilde{V}_{n+1}=i\begin{bmatrix}-\widetilde{G}_{n+1}(z)&\widetilde{F}_{n}(z)\\-\widetilde{H}_{n}(z)&\widetilde{G}_{n+1}(z)\end{bmatrix},$$
\begin{equation}\label{eq:AKNSCoefficient}\begin{aligned}
&\widetilde{F}_{n}(z)=\sum_{s=0}^{n}\widetilde{f}_{n-s}z^{s},~\widetilde{f}_{0}=-iq,\\
&\widetilde{G}_{n+1}=\sum_{s=0}^{n+1}\widetilde{g}_{n+1-s}z^s,~\widetilde{G}_0=1,\\
&\widetilde{H}_n=\sum_{s=0}^n\widetilde{h}_{n-s}z^s,~\widetilde{H}_0=ip,
\end{aligned}\end{equation}
with the time dependent coefficients $\{\widetilde{f}_\ell,\widetilde{g}_\ell,\widetilde{h}_\ell\}_{\ell\in\N_0}$ recursively defined as
\begin{equation}\label{eq:AKNSCoefficient1}\begin{aligned}
&\widetilde{f}_0=-iq,~\widetilde{g}_0=1,~\widetilde{h}_0=ip,\\
&\widetilde{f}_{\ell+1}=(i/2)f_{\ell,x}-iq\widetilde{g}_{\ell+1},~\ell\in\N_0,\\
&\widetilde{g}_{\ell+1,x}=p\widetilde{f}_\ell+q\widetilde{h}_\ell,~\ell\in\N_0,\\
&\widetilde{h}_{\ell+1}=-(i/2)\widetilde{h}_{\ell,x}+ip\widetilde{g}_{\ell+1},~\ell\in\N_0;
\end{aligned}\end{equation}

The first a few terms can be computed explicitly as follows
\begin{equation}\label{eq:InitialTerms}
\begin{aligned}
&\widetilde{f}_0=-iq,~\widetilde{f}_1=\frac{1}{2}\partial_x q+c_1(-iq),\\
&\widetilde{f}_2=\frac{i}{4}\partial^2_x q-\frac{i}{2}pq^2+c_1(\frac{1}{2}\partial_x q)+c_2(-iq),\\
&\widetilde{f}_3=-\frac{1}{8}\partial_x^3q+\frac{3}{4}pq\partial_xq+c_1(i\frac{\partial_x^2q}{4})+c_2(-\frac{ipq^2}{2})+c_3(-iq);\\
&\widetilde{g}_0=1,~\widetilde{g}_1=c_1,\\
&\widetilde{g}_2=\frac{1}{2}pq+c_2,\\
&\widetilde{g}_3=\frac{i}{4}(p\partial_xq-q\partial_xp)+c_2(\frac{pq}{2})+c_3;\\
&\widetilde{h}_0=ip,~\widetilde{h}_1=\frac{1}{2}\partial_x p+c_1(ip),\\
&\widetilde{h}_2=-\frac{i}{4}\partial_x^2p+\frac{i}{2}p^2q+c_1(\frac{1}{2}\partial_x p)+c_2(ip),\\
&\widetilde{h}_3=-\frac{1}{8}\partial_x^3p+\frac{3}{4}pq\partial_xp+c_2(\frac{\partial_x p+ip2q}{2})+c_3(ip).
\end{aligned}
\end{equation}
The constants $\{c_i:i\in\N_0\}$ with convention $c_0=1$ are integration constants, the case $c_i=0$ for all $i\in\N$ is called {\it homogeneous} AKNS hierarchy. $\widetilde{V}_{n+1}, U$ are differential expressions of order $n+1$ and $1$.
The interested reader may consult Section 3.2 and Section 3.4 of \cite{GH03} for more detailed explanations.
The time dependent AKNS hierarchy now reads as the following zero-curvature equation
\begin{equation}\label{eq:zeroCur}
\begin{aligned}
U_{t_n}-\widetilde{V}_{n+1,x}+[U,\widetilde{V}_{n+1}]&=0\\
-V_{n+1,x}+[U,V_{n+1}]&=0,
\end{aligned}
\end{equation}
where $U,V_{n+1}$ are the corresponding stationary version of \eqref{eq:AKNSCoefficient}, \eqref{eq:AKNSCoefficient1}. 
The mKdV equation \eqref{eqn:mKdV} corresponds to the hierarchy for which \(n=2\). The wide tilde in these notations is merely for the intention of distinguishing the homogeneous case \(c_i=0,i\geq 1\) and nonhomogeneous case.

As written, this construction of the AKNS hierarchy corresponds to the Lax operator
\[
M = \begin{bmatrix}
i \partial_x & - iq \\
ip & -i \partial_x
\end{bmatrix}
\]
To specialize it to the defocusing case so that it corresponds to the Lax operator
\[
\Lambda_u = \begin{bmatrix}
i \partial_x & u \\
\overline{u} & -i \partial_x
\end{bmatrix}
\]
one takes $q = iu$ and $p = -i \overline{u}$.
\end{appendix}




\end{document}